\def\ps@pprintTitle{%
   \let\@oddhead\@empty
   \let\@evenhead\@empty
   \let\@oddfoot\@empty
   \let\@evenfoot\@oddfoot
}
\newcommand{\freccia}{-{Stealth[scale=1.5]}} 
\newtheorem{thm}{Theorem}[section]
\theoremstyle{definition}
\newtheorem{oss}[thm]{Remark}
\definecolor{celeste}{HTML}{0072BD}
\definecolor{rosso}{HTML}{A2142F}
\newcommand{\cN}{\mathcal{N}}
\newcommand{\cM}{\mathcal{M}}
\newcommand{\cB}{\mathcal{B}}
\newcommand{\cL}{\mathcal{L}}
\newcommand{\cS}{\mathcal{S}}
\newcommand{\cE}{\mathcal{E}}
\newcommand{\RR}{\mathbb{R}}
\newcommand{\NN}{\mathbb{N}}
\newcommand{\fS}{\mathfrak{S}}
\newcommand{\supp}{\text{supp}\,}
\newcommand{\NS}{N$_2$S}
\newcommand{\NNSS}{N$_2$S$_2$}
\renewcommand{\int}{\text{int}}
\newcommand{\SSS}{\mathbb{S}}
\newcommand{\spn}{\text{span}\,}
\newcommand{\diam}{\text{diam}}
\newcommand{\spp}{\text{sep}}
\begin{document}
\begin{frontmatter}
\title{Effective grading refinement for locally linearly independent LR B-splines}

\author{Francesco Patrizi}
\ead{francesco.patrizi@ipp.mpg.de}

\address{Max-Planck-Institut f{\"u}r Plasmaphysik, Boltzmannstra{\ss}e 2, 85748 Garching bei M{\"u}nchen, Germany}

\begin{abstract}
We present a new refinement strategy for locally refined B-splines which ensures the local linear independence of the basis functions. The strategy also guarantees the spanning of the full spline space on the underlying locally refined mesh. The resulting mesh has nice grading properties which grant the preservation of shape regularity and local quasi uniformity of the elements in the refining process. 
\end{abstract}

\begin{keyword}
LR B-splines \sep local linear independence \sep graded meshes \sep adaptive methods.
\end{keyword}
\end{frontmatter}

\section{Introduction}
Locally Refined (LR) B-splines have been introduced in \cite{tor} as generalization of the tensor product B-splines to achieve adaptivity in the discretization process. By allowing local insertions in the underlying  mesh, the approximation efficiency is dramatically improved as one avoids the wasting of degrees of freedom by increasing the number of basis functions only where rapid and large variations occur in the analyzed object. 
Nevertheless, the adoption of LR B-splines for simulation purposes in the Isogeometric Analysis (IgA) framework \cite{iga} is hindered by the risk of linear dependence relations \cite{lindep}. Although a complete characterization of linear independence is still not available, the \emph{local} linear independence of the basis functions is guaranteed when the underlying Locally Refined (LR) mesh has the so-called Non-Nested-Support (\NS) property \cite{bressan1,bressan2}. The local linear independence not only avoids the hurdles of dealing with singular linear systems, but it also improves the sparsity of the matrices when assembling the numerical solution. Furthermore, it allows the construction of efficient quasi-interpolation schemes \cite{N2S2}. Such a strong property of the basis functions is a rarity, or at least it is quite cumbersome to gain, among the technologies used for adaptive IgA. For instance, it is not available for (truncated) hierarchical B-splines \cite{hb,thb} while it can be achieved for PHT-splines \cite{pht} and Analysis-suitable (and dual-compatible) T-splines \cite{ast}, respectively, by imposing reduced regularity and by endorsing a considerable propagation in the refinement \cite{ast1}. 

In this work we present a new refinement strategy to produce LR meshes with the \NS~property. In addition to the local linear independence of the associated LR B-splines, the strategy proposed has two further features: the space spanned coincides with the full space of spline functions and it guarantees  smooth grading in the transitions between coarser and finer regions on the LR meshes produced. The former property boosts the approximation power with respect to the degrees of freedom as the spaces used for the discretization in the IgA context are in general just subsets of the spline space. Such a spanning completeness is more demanding to achieve in terms of meshing constraints and regularity, respectively, for (truncated) hierarchical B-splines and splines over T-meshes \cite{thb1,thb2,bressan2,ast2}. 
The grading properties are instead required to theoretically ensure optimal algebraic rates of convergence in adaptive IgA methods \cite{axioms, b}, even in presence of singularities in the PDE data or solution, similarly to what happens in Finite Element Methods (FEM) \cite{nochetto}. More specifically, the LR meshes generated by the proposed strategy satisfy the requirements listed in the  axioms of adaptivity \cite{axioms} in terms of grading and overall appearance. Such axioms constitute a set of sufficient conditions to guarantee convergence at optimal algebraic rate in adaptive methods. Furthermore, mesh grading has been assumed to prove robust convergence of solvers for linear systems arising in the adaptive IgA framework with respect to mesh size and number of iterations \cite{hendrik}. For these reasons, we have called the strategy Effective Grading (EG) refinement strategy.

The next sections are organized as follows. In Section \ref{preliminaries} we recall the definitions of tensor product meshes and B-splines from a perspective that ease the introduction of LR meshes and LR B-splines. In the second part, we define the \NS~property for the LR meshes and provide the characterization for the local linear independence of the LR B-splines. In Section \ref{EG} we first define the EG strategy and then we prove that it has the \NS~property. The completeness of the space spanned and the grading of the LR meshes are discussed at the end of the section. Finally, in Section \ref{conclusions} we draw the conclusions and present the future research. 

\section{Preliminaries}\label{preliminaries}
In this section we recall the definition of Locally Refined (LR) meshes and B-splines and the conditions ensuring the local linear independence of the latter. We stick to the 2D setting for the sake of simplicity, however, many of the following definitions have a direct generalization to any dimension, see \cite{tor} for details. We assume the reader to be familiar with the definition and main properties of 
B-splines, in particular with the knot insertion procedure. An introduction to this topic can be found, e.g., in the review papers \cite{manni1,manni2} or in the classical books \cite{deboor} and \cite{schumaker}.
\subsection{LR meshes and LR B-splines}
LR meshes and related sets of LR B-splines are constituted simultaneously and iteratively from tensor meshes and sets of tensor B-splines. We therefore start by recalling the latter using a terminology which is proper of the LR B-spline theory. Thereby, we can easily introduce the new concepts by generalizing the tensor case.
A \textbf{tensor (product) mesh} on an axes-aligned rectangular domain $\Omega \subseteq \RR^2$ can be represented as a triplet $\cN = (\cM, \pmb{p}, \mu)$ where $\cM$ is a collection (with repetitions) of \textbf{meshlines}, which are the segments connecting two (and only two) vertices of a rectangular grid on $\Omega$.  $\pmb{p}=(p_1,p_2)$ is a \textbf{bidegree}, that is, a pair of integers in $\NN$, and $\mu:\cM \to \NN^*$ is a map that counts the number of times any meshline $\gamma$ appears in $\cM$. $\mu(\gamma)$ is called \textbf{multiplicity of the meshline} $\gamma$. Furthermore, the following constraints are imposed on $\cM$: 
\begin{itemize}
\item[C1.] $\mu(\gamma_1) = \mu(\gamma_2)$ if $\gamma_1,\gamma_2\in\cM$ are contiguous and aligned,
\item[C2.] $\mu(\gamma) \leq p_1+1$ if $\gamma \in \cM$ is vertical and $\mu(\gamma) \leq p_2+1$ if   $\gamma$ is horizontal. In particular, we say that $\gamma$ has \textbf{full multiplicity} if the equality holds.
\end{itemize}  
A tensor mesh $\cN$ is $\textbf{open}$ if the meshlines on $\partial \Omega$ have full multiplicities. 

Given an open tensor mesh $\cN=(\cM, \pmb{p}, \mu)$, consider another tensor mesh $\cN_B := (\cM_B, \pmb{p}, \mu_B)$ where $\cM_B$ is a sub-collection of meshlines $\cM_B \subseteq \cM$ forming a rectangular grid in a sub-domain $\Omega_B\subseteq \Omega$ of $p_1+2$ vertical lines and $p_2+2$ horizontal lines, where a line is counted $m$ times if the meshlines in it have multiplicity $m$ with respect to $\mu_B$. The multiplicity $\mu_B:\cM_B \to \NN^*$ is such that $\mu_B(\gamma) \leq \mu(\gamma)$ for all $\gamma \in \cM_B$. Such vertical and horizontal lines can be parametrized as $\{x_i\}\times [y_1, y_{p_2+2}]$ and $[x_1, x_{p_1+2}]\times \{y_j\}$ with $\pmb{x}:=(x_i)_{i=1}^{p_1+2}$ and $\pmb{y}=(y_j)_{j=1}^{p_2+2}$ such that $x_i \leq x_{i+1}$ and $y_j \leq y_{j+1}$ and with $x_i, y_j$ appearing $p_1+1$ and $p_2+1$ times at most in $\pmb{x}$ and $\pmb{y}$, respectively, because of the constraint C2 on $\cM$. 
On $\pmb{x}$ and $\pmb{y}$ we can define a \textbf{tensor (product) B-spline}, $B=B[\pmb{x}, \pmb{y}]$. Then, we have that the support of $B$ is $\Omega_B$ and hence $\cN_B$ is a tensor mesh in $\supp B$. We say that $B$ has \textbf{minimal support} on $\cN$ if no line in $\cM\backslash \cM_B$ traverses $\int(\supp B)$ entirely and $\mu_B \equiv \mu$ on the meshlines of $\cM_B$ in the interior of $\supp B$. 
The collection of all the minimal support B-splines on $\cN$ constitutes the \textbf{B-spline set on $\cN$}. If instead $B$ has not minimal support on $\cN$, then there exists a line in $\cM$ entirely traversing $\int(\supp B)$ which either is not in $\cM_B$ or it is in $\cM_B$ but its meshlines have a higher multiplicity with respect to $\mu$ than $\mu_B$. In both cases, such exceeding line corresponds to extra knots either in the $x$- or $y$-direction. One could then express $B$ with B-splines of minimal support on $\cN$ by performing knot insertions. An example of B-spline with no minimal support on a tensor mesh is reported in Figure \ref{nominimal}.
\begin{figure}
\centering
\subfloat[]{
\begin{tikzpicture}[scale=3]
\draw[white] (.4,-.275) node[below]{$x_1^2$};
\filldraw[fill=red!50!white,draw=red, line width=4pt] (.2,.4) -- (1,.4) -- (1,1) -- (.2,1) -- cycle; 
\draw[red,line width=4pt] (.4,.4) -- (.4,1);
\draw[red,line width=4pt] (.8,.4) -- (.8,1);
\draw[red,line width=4pt] (.2,.6) -- (1,.6);
\draw[red,line width=4pt] (.2,.8) -- (1,.8);
\draw[step=.2] (0,0) grid (1.2,1.2);
\draw (.2,0) node[below]{$x_1$};
\draw (.4,0) node[below]{$x_2$};
\draw (.8,0) node[below]{$x_3$};
\draw (1,0) node[below]{$x_4$};
\draw (.6,.025) node[below]{$\hat{x}$};
\draw (0,.4) node[left]{$y_1$};
\draw (0,.6) node[left]{$y_2$};
\draw (0,.8) node[left]{$y_3$};
\draw (0,1) node[left]{$y_4$};
\end{tikzpicture}}\quad
\subfloat[]{
\begin{tikzpicture}[scale=3]
\filldraw[fill=green!50!black,opacity=.5,draw=green!50!black, line width=4pt] (.2,.4) -- (.8,.4) -- (.8,1) -- (.2,1) -- cycle; 
\draw[green!50!black,line width=4pt,opacity=.5] (.4,.4) -- (.4,1);
\draw[green!50!black,line width=4pt,opacity=.5] (.6,.4) -- (.6,1);
\draw[green!50!black,line width=4pt,opacity=.5] (.2,.6) -- (.8,.6);
\draw[green!50!black,line width=4pt,opacity=.5] (.2,.8) -- (.8,.8);

\filldraw[fill=orange!90!black,opacity=.5,draw=orange!90!black, line width=4pt] (.4,.4) -- (1,.4) -- (1,1) -- (.4,1) -- cycle; 
\draw[orange!90!black,line width=4pt,opacity=.5] (.8,.4) -- (.8,1);
\draw[orange!90!black,line width=4pt,opacity=.5] (.6,.4) -- (.6,1);
\draw[orange!90!black,line width=4pt,opacity=.5] (.4,.6) -- (1,.6);
\draw[orange!90!black,line width=4pt,opacity=.5] (.4,.8) -- (1,.8);
\draw[step=.2] (0,0) grid (1.2,1.2);
\draw (.2,0) node[below]{$x_1^1$};
\draw (.4,0) node[below]{$x_2^1$};
\draw (.8,0) node[below]{$x_4^1$};
\draw (1,0) node[below]{$x_4^2$};
\draw (.6,0) node[below]{$x_3^1$};
\draw (.4,-.15) node[below]{\rotatebox{90}{$=$}};
\draw (.8,-.15) node[below]{\rotatebox{90}{$=$}};
\draw (.6,-.15) node[below]{\rotatebox{90}{$=$}};
\draw (.4,-.275) node[below]{$x_1^2$};
\draw (.8,-.275) node[below]{$x_3^2$};
\draw (.6,-.275) node[below]{$x_2^2$};
\draw (0,.4) node[left]{$y_1$};
\draw (0,.6) node[left]{$y_2$};
\draw (0,.8) node[left]{$y_3$};
\draw (0,1) node[left]{$y_4$};
\end{tikzpicture}
}
\caption{Example of B-spline with no minimal support on a tensor mesh. Let us consider the tensor mesh $\cN=(\cM,(2,2),1)$ as in figure (a). Let also $B = B[\pmb{x}, \pmb{y}]$  be the B-spline  of bidegree $(2,2)$ whose knot vectors are $\pmb{x}=(x_i)_{i=1}^4, \pmb{y}=(y_j)_{j=1}^4$ and whose support and tensor mesh $\cN_B = (\cM_B, (2,2), 1)$ are highlighted in figure (a). $B$ has not minimal support on $\cN$ as the vertical line placed at value $\hat{x}$ is traversing $\supp B$ entirely while its meshlines in $\supp B$ are not contained in $\cM_B$. However, by knot insertion of $\hat{x}$ in $\pmb{x}$ we can express $B$ in terms of two minimal support B-splines on $\cN$, $B[\pmb{x}^1,\pmb{y}]$ and $B[\pmb{x}^2,\pmb{y}]$, with $\pmb{x}^1=(x_i^1)_{i=1}^4, \pmb{x}^2=(x_i^2)_{i=1}^4$. The supports of the latter partially overlap horizontally and are represented in figure (b).}\label{nominimal}
\end{figure}

Given now an open tensor mesh $\cN=(\cM, \pmb{p}, \mu)$ and the corresponding B-spline set $\cB$, assume that we either
\begin{itemize}
\item[R1.] raise by one the multiplicity of a set of contiguous and colinear meshlines in $\cM$, which, however, still has to satisfy the constraints C1--C2,
\item[R2.] insert a new axis-aligned line $\gamma$ with endpoints on $\cM$, traversing the support of at least one B-spline $B \in \cB$, and extend $\mu$ to the segments connecting the intersection points of $\gamma$ and $\cM$, by setting it equal to 1 for such new meshlines.
\end{itemize}

Let $\cM'$ be the new collection of meshlines and $\mu'$ be the multiplicity for $\cM'$. By construction,  there exists at least one B-spline $B \in \cB$ that does not have minimal support on $\cN' = (\cM', \pmb{p}, \mu')$. By performing knot insertions we can, however, replace $B$ in the collection $\cB$ with B-splines of minimal support on $\cN'$. This creates a new set $\cB'$ of B-splines of minimal support defined on $\cN'$. We are now ready to define (recursively) LR meshes and LR B-splines.

An \textbf{LR mesh} on $\Omega$ is a triplet $\cN' = (\cM',\pmb{p},\mu')$ which either is a tensor mesh or it is obtained by applying the procedure R1 or R2 to $\cN=(\cM,\pmb{p},\mu)$ which, in turn, is an LR mesh. The \textbf{LR B-spline set} $\cB'$ on $\cN'$ is the B-spline set on $\cN'$ if the latter is a tensor mesh or, in case $\cN'$ is not a tensor mesh, it is obtained via knot insertions from the LR B-spline set $\cB$ defined on $\cN$.

In other words, we refine a coarse tensor mesh by inserting new lines (which possibly can have an endpoint in the interior of $\Omega$), one at a time, or by raising the multiplicity of a line already on the mesh. On the initial tensor mesh we consider the tensor B-splines and whenever a B-spline in our collection has no longer minimal support during the mesh refinement process, we replace it by using the knot insertion procedure. The LR B-splines will be the final set of B-splines produced by this algorithm. In Figure \ref{R2} we illustrate the evolution of an LR B-spline throughout such process.
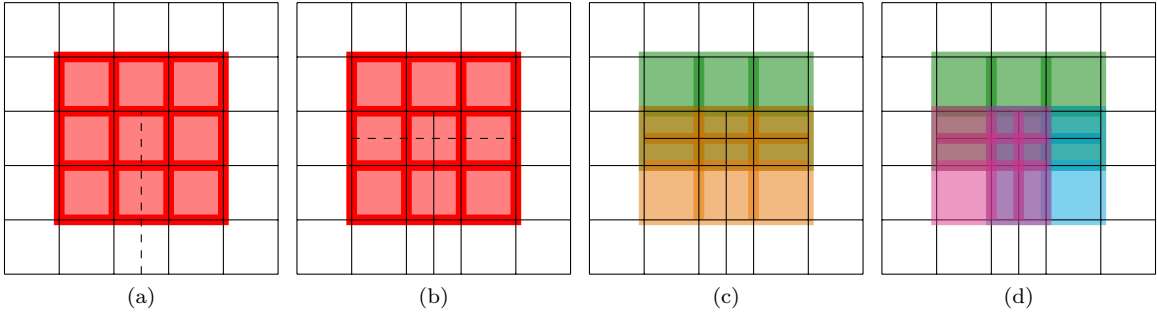
\begin{figure}
\centering
\subfloat[]{
\begin{tikzpicture}[scale=3.6]
\filldraw[fill=red!50!white,draw=red, line width=4pt] (.2,.4) -- (.8,.4) -- (.8,1) -- (.2,1) -- cycle; 
\draw[red,line width=4pt] (.4,.4) -- (.4,1);
\draw[red,line width=4pt] (.6,.4) -- (.6,1);
\draw[red,line width=4pt] (.2,.6) -- (.8,.6);
\draw[red,line width=4pt] (.2,.8) -- (.8,.8);
\draw[step=.2] (0,.2) grid (1,1.2);
\draw[dashed] (.5,.2) -- (.5,.8);
\end{tikzpicture}
}
\subfloat[]{
\begin{tikzpicture}[scale=3.6]
\filldraw[fill=red!50!white,draw=red, line width=4pt] (.2,.4) -- (.8,.4) -- (.8,1) -- (.2,1) -- cycle; 
\draw[red,line width=4pt] (.4,.4) -- (.4,1);
\draw[red,line width=4pt] (.6,.4) -- (.6,1);
\draw[red,line width=4pt] (.2,.6) -- (.8,.6);
\draw[red,line width=4pt] (.2,.8) -- (.8,.8);
\draw[step=.2] (0,.2) grid (1,1.2);
\draw (.5,.2) -- (.5,.8);
\draw[dashed] (.2,.7) -- (.8,.7);
\end{tikzpicture}
}
\subfloat[]{
\begin{tikzpicture}[scale=3.6]
\filldraw[fill=green!50!black,opacity=.5,draw=green!50!black, line width=4pt] (.2,.6) -- (.8,.6) -- (.8,1) -- (.2,1) -- cycle; 
\draw[green!50!black,line width=4pt,opacity=.5] (.4,.6) -- (.4,1);
\draw[green!50!black,line width=4pt,opacity=.5] (.6,.6) -- (.6,1);
\draw[green!50!black,line width=4pt,opacity=.5] (.2,.7) -- (.8,.7);
\draw[green!50!black,line width=4pt,opacity=.5] (.2,.8) -- (.8,.8);

\filldraw[fill=orange!90!black,opacity=.5,draw=orange!90!black, line width=4pt] (.2,.4) -- (.8,.4) -- (.8,.8) -- (.2,.8) -- cycle; 
\draw[orange!90!black,line width=4pt,opacity=.5] (.4,.4) -- (.4,.8);
\draw[orange!90!black,line width=4pt,opacity=.5] (.6,.4) -- (.6,.8);
\draw[orange!90!black,line width=4pt,opacity=.5] (.2,.7) -- (.8,.7);
\draw[orange!90!black,line width=4pt,opacity=.5] (.2,.6) -- (.8,.6);
\draw[step=.2] (0,.2) grid (1,1.2);
\draw (.5,.2) -- (.5,.8);
\draw (.2,.7) -- (.8,.7);
\end{tikzpicture}
}
\subfloat[]{
\begin{tikzpicture}[scale=3.6]
\filldraw[fill=green!50!black,opacity=.5,draw=green!50!black, line width=4pt] (.2,.6) -- (.8,.6) -- (.8,1) -- (.2,1) -- cycle; 
\draw[green!50!black,line width=4pt,opacity=.5] (.4,.6) -- (.4,1);
\draw[green!50!black,line width=4pt,opacity=.5] (.6,.6) -- (.6,1);
\draw[green!50!black,line width=4pt,opacity=.5] (.2,.7) -- (.8,.7);
\draw[green!50!black,line width=4pt,opacity=.5] (.2,.8) -- (.8,.8);

\filldraw[fill=cyan!90!black,opacity=.5,draw=cyan!90!black, line width=4pt] (.4,.4) -- (.8,.4) -- (.8,.8) -- (.4,.8) -- cycle; 
\draw[cyan!90!black,line width=4pt,opacity=.5] (.6,.4) -- (.6,.8);
\draw[cyan!90!black,line width=4pt,opacity=.5] (.5,.4) -- (.5,.8);
\draw[cyan!90!black,line width=4pt,opacity=.5] (.4,.7) -- (.8,.7);
\draw[cyan!90!black,line width=4pt,opacity=.5] (.4,.6) -- (.8,.6);
\draw[step=.2] (0,.2) grid (1,1.2);
\draw (.5,.2) -- (.5,.8);
\draw (.2,.7) -- (.8,.7);

\filldraw[fill=magenta!90!black,opacity=.5,draw=magenta!90!black, line width=4pt] (.2,.4) -- (.6,.4) -- (.6,.8) -- (.2,.8) -- cycle; 
\draw[magenta!90!black,line width=4pt,opacity=.5] (.4,.4) -- (.4,.8);
\draw[magenta!90!black,line width=4pt,opacity=.5] (.5,.4) -- (.5,.8);
\draw[magenta!90!black,line width=4pt,opacity=.5] (.2,.7) -- (.6,.7);
\draw[magenta!90!black,line width=4pt,opacity=.5] (.2,.6) -- (.6,.6);
\end{tikzpicture}
}
\caption{Evolution of an LR B-spline throughout the refinement process of a tensor mesh. Consider the tensor mesh $\cN=(\cM,(2,2),1)$ reported in figure (a). Let $B[\pmb{x},\pmb{y}]$ be the minimal support B-spline whose support and tensor mesh are highlighted in figure (a). Let us insert a first vertical line in $\cN$ (dashed in figure (a)). This line does not traverses $\supp B$, hence $B$ is preserved in the B-spline set on the new LR mesh, as shown in figure (b). We then insert an horizontal line (dashed in figure (b)). This time the line is traversing $\supp B$ and $B[\pmb{x},\pmb{y}]$ is replaced by the B-splines $B[\pmb{x},\pmb{y}^1]$ and $B[\pmb{x},\pmb{y}^2]$ involved in the knot insertion. In figure (c) we see the supports and tensor meshes of the latter on the new LR mesh. In particular we see that $B[\pmb{x},\pmb{y}^1]$ (the bottom B-spline in figure (c)) has not minimal support on the LR mesh as there is a vertical line traversing its support without being part of its tensor mesh. Thus $B[\pmb{x},\pmb{y}^1]$ is replaced as well, via knot insertion, by two other B-splines $B[\pmb{x}^1,\pmb{y}^1], B[\pmb{x}^2, \pmb{y}^1]$. Therefore, in the end, we move from $B[\pmb{x}, \pmb{y}]$, on the tensor mesh, to $B[\pmb{x}^1, \pmb{y}^1], B[\pmb{x}^2,\pmb{y}^1], B[\pmb{x}, \pmb{y}^2]$ on the final LR mesh. The supports and tensor meshes of the latter are represented in figure (d).}\label{R2}
\end{figure}

We conclude this section with a short list of remarks:
\begin{itemize}
\item In general the mesh refinement process producing a given LR mesh is not unique, as the insertion ordering can often be changed. However, the final LR B-spline set is well defined because independent of such insertion ordering, as proved in \cite[Theorem 3.4]{tor}.
\item The LR B-spline set is in general only a subset of the set of minimal support B-spline defined on the LR mesh, although the two sets coincide on the initial tensor mesh. When inserting new lines the LR B-splines are the result of the knot insertion procedure, applied to LR B-splines defined on the previous LR mesh, while some minimal support B-splines could be created from scratch on the new LR mesh. Further details and examples can found in \cite[Section 5]{lindep}.
\item We have introduced LR meshes and LR B-splines starting from open tensor meshes and related sets of tensor B-splines. It is actually not necessary that the initial tensor mesh is open, as long as it is possible to define at least one tensor B-spline on it. The openness was assumed indeed to verify this requirement.
\item In the next sections, we always consider tensor and LR meshes with boundary meshlines of full multiplicity and internal meshlines of multiplicity 1, if not specified otherwise. In particular, this means that we update the LR meshes and LR B-spline sets  only by performing the procedure R2.
\end{itemize}

\subsection{Local linear independence and N$_2$S-property}
The LR B-splines coincide with the tensor B-splines when the underlying LR mesh is a tensor mesh and in general the formulation of LR B-splines remains broadly similar to that of tensor B-splines even though the former address local refinements. As a consequence, in addition to making them one of the most elegant extensions to achieve adaptivity, this similarity implies that many of the B-spline properties are preserved by the LR B-splines. For example, they are non-negative, have minimal support, are piecewise polynomials and can be expressed by the LR B-splines on finer LR meshes using non-negative coefficients (provided by the knot insertion procedure). Furthermore, it is possible to scale them by means of positive weights so that they also form a partition of unity, see \cite[Section 7]{tor}. 

However, as opposed to tensor B-splines, they could be not locally linearly independent. Actually, the set of LR B-splines can even be linearly dependent (examples can be found in \cite{tor,lindep,N2S2}). 

Nevertheless, in \cite{bressan1,bressan2} a characterization of the local linear independence of the LR B-splines has been provided in terms of meshing constraints leading to particular arrangements of the LR B-spline supports on the LR mesh. In this section we recall such characterization.

First of all, we introduce the concept of nestedness. Given an LR mesh $\cN=(\cM,\pmb{p},\mu)$, let $B_1, B_2$ be two different LR B-splines defined on $\cN$. We say that $B_2$ \textbf{is nested in} $B_1$ if
\begin{itemize}
\item $\supp B_2 \subseteq \supp B_1$,
\item $\mu_{B_2}(\gamma) \leq \mu_{B_1}(\gamma)$ for all the meshlines $\gamma$ of $\cM$ in $\partial \supp B_1 \cap \partial \supp B_2$. 
\end{itemize}
An LR mesh where no LR B-spline is nested is said to have the \textbf{Non-Nested-Support property}, or in short the \textbf{\NS~property}. Figure \ref{nested} shows an example of an LR B-spline nested in another. 
\begin{figure}[t!]
\centering
\subfloat[]{
\begin{tikzpicture}[scale=3.2]
\draw (0,0) -- (1,0) -- (1,1) -- (0,1) -- cycle;
\draw (.02,0) -- (.02,1);
\draw (.33, 0) -- (.33, 1);
\draw (.66, 0) -- (.66, 1);
\draw (0, .3) -- (1, .3);
\draw (0,.7) -- (1, .7);
\draw (.5, 0) -- (.5,.7);
\draw (0,.5) -- (.66,.5);
\end{tikzpicture}
}\qquad
\subfloat[]{
\begin{tikzpicture}[scale=3.2]
\filldraw[fill=red!50, draw = red, line width=4pt] (.02,0) -- (1,0) -- (1,1) -- (.02,1) -- cycle;
\draw[red, line width=4pt] (.02,.33) -- (1,.33);
\draw[red, line width=4pt] (.02,.66) -- (1,.66);
\draw[red, line width=4pt] (.33,0) -- (.33,1);
\draw[red, line width=4pt] (.66,0) -- (.66,1);
\draw (0,0) -- (1,0) -- (1,1) -- (0,1) -- cycle;
\draw (.02,0) -- (.02,1);
\draw (.33, 0) -- (.33, 1);
\draw (.66, 0) -- (.66, 1);
\draw (0, .33) -- (1, .33);
\draw (0,.66) -- (1, .66);
\draw (.5, 0) -- (.5,.66);
\draw (0,.5) -- (.66,.5);
\end{tikzpicture}
}\qquad
\subfloat[]{
\begin{tikzpicture}[scale=3.2]
\filldraw[fill=green!50!black!50, draw = green!50!black, line width=4pt] (.02,0) -- (.66,0) -- (.66,.66) -- (.02,.66) -- cycle;
\draw[green!50!black, line width=4pt] (.02,.33) -- (.66,.33);
\draw[green!50!black, line width=4pt] (.02,.5) -- (.66,.5);
\draw[green!50!black, line width=4pt] (.5,0) -- (.5,.66);
\draw[green!50!black, line width=4pt] (.33,0) -- (.33,.66);
\draw (0,0) -- (1,0) -- (1,1) -- (0,1) -- cycle;
\draw (.02,0) -- (.02,1);
\draw (.33, 0) -- (.33, 1);
\draw (.66, 0) -- (.66, 1);
\draw (0, .33) -- (1, .33);
\draw (0,.66) -- (1, .66);
\draw (.5, 0) -- (.5,.66);
\draw (0,.5) -- (.66,.5);
\end{tikzpicture}
}\qquad
\subfloat[]{
\begin{tikzpicture}[scale=3.2]
\filldraw[fill=cyan!50, draw = cyan, line width=4pt] (0,0) -- (.5,0) -- (.5,.66) -- (0,.66) -- cycle;
\draw[cyan, line width=4pt] (0,.33) -- (.5,.33);
\draw[cyan, line width=4pt] (0,.5) -- (.5,.5);
\draw[cyan, line width=4pt] (.33,0) -- (.33,.66);
\draw[cyan, line width=4pt] (.02,0) -- (.02,.66);
\draw (0,0) -- (1,0) -- (1,1) -- (0,1) -- cycle;
\draw (.02,0) -- (.02,1);
\draw (.33, 0) -- (.33, 1);
\draw (.66, 0) -- (.66, 1);
\draw (0, .33) -- (1, .33);
\draw (0,.66) -- (1, .66);
\draw (.5, 0) -- (.5,.66);
\draw (0,.5) -- (.66,.5);
\end{tikzpicture}
}\caption{Example of nested LR B-splines on the LR mesh $\cN=(\cM,(2,2),\mu)$ shown in (a). All the meshlines of $\cM$ have multiplicity 1 except those in the left edge, highlighted with a double line, which have multiplicity 2.  In (b)--(d) three LR B-splines $B_1,B_2,B_3$ defined on $\cN$, represented by means of their supports and tensor meshes. All the meshlines in $\cM_{B_1},\cM_{B_2}$ and $\cM_{B_3}$ have multiplicity 1 except those on the left edge in $\cM_{B_3}$ which have multiplicity 2. Therefore, $B_2$ is nested in $B_1$ while $B_3$ is not nested neither in $B_2$ nor $B_3$, despite that $\supp B_3 \subseteq \supp B_2$ and $\supp B_3 \subseteq\supp B_1$, because the shared meshlines in the left edge of $\supp B_3$, $\supp B_2$ and $\supp B_1$ have multiplicity 2 in $\cM_{B_3}$ and multiplicity 1 in $\cM_{B_2}$ and $\cM_{B_1}$.}\label{nested}
\end{figure}
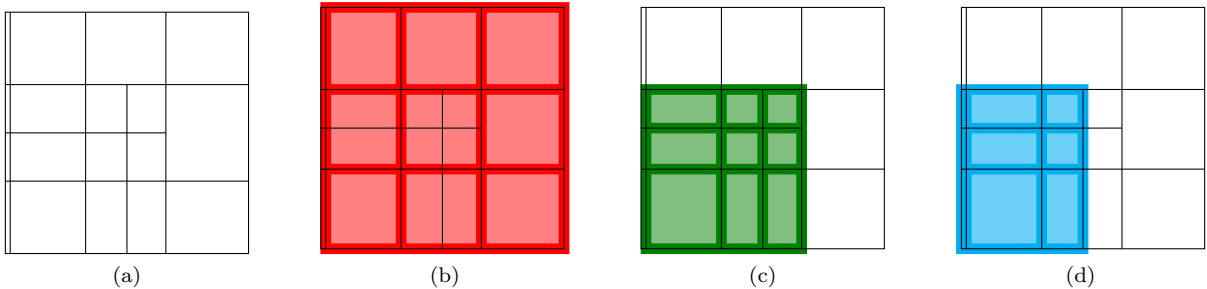

The next result, from \cite[Theorem 4]{bressan2}, relates the local linear independence of the LR B-splines to the \NS~property of the LR mesh. In order to present it, we recall that given an LR mesh $\cN=(\cM,\pmb{p},\mu)$, $\cM$ induces a \textbf{box-partition} of $\Omega$, that is, a collection of axes-aligned rectangles, called \textbf{boxes}, with disjoint interiors covering $\Omega$. Hereafter, we will just call them boxes of $\cM$, with an abuse of notation, instead of boxes in the box-partition induced by $\cM$. 

\begin{thm}\label{loclinind}
Let $\cN=(\cM, \pmb{p}, \mu)$ be an LR mesh and let $\cL$ be the related LR B-spline set. The following statements are equivalent.
\begin{enumerate}
\item The elements of $\cL$ are locally linearly independent.
\item $\cN$ has the \NS~property.
\item Any box $\beta$ of $\cM$ is contained in exactly $(p_1+1)(p_2+1)$ LR B-spline supports, that is,
$$
\#\{B \in \cL \,:\, \supp B \supseteq \beta\} = (p_1+1)(p_2+1).
$$
\item The LR B-splines in $\cL$ form a partition of unity, without the use of scaling weights.
\end{enumerate}
\end{thm}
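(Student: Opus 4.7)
The plan is to establish the four-way equivalence by using statement (3) as a hub, proving (1)$\Leftrightarrow$(3), (2)$\Leftrightarrow$(3), and (3)$\Leftrightarrow$(4). The key preliminary observation, which I would set up first, is that for every box $\beta$ of $\cM$, the surrounding knot lines inherited from $(\cM,\mu)$ determine exactly $(p_1+1)(p_2+1)$ ``local'' tensor B-splines whose restrictions to $\beta$ form a basis of the bivariate polynomial space $\mathbb{P}_{p_1,p_2}$. Because $\cL$ is built recursively by knot insertion, every $B$ in $\cL_\beta := \{B \in \cL : \supp B \supseteq \beta\}$ restricts on $\beta$ to a positive multiple of exactly one of these local tensor B-splines, giving a well-defined assignment $\Phi_\beta : \cL_\beta \to \{\text{local tensor B-splines at }\beta\}$.

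Given this setup, (1)$\Leftrightarrow$(3) and (3)$\Leftrightarrow$(4) become essentially dictionary facts about $\Phi_\beta$. The weighted partition of unity recalled in Section~\ref{preliminaries} expresses the constant $1$ on $\beta$ as a positive combination of elements of $\cL_\beta$, and since the unique expansion of $1$ in the local tensor B-spline basis has all coefficients equal to $1$, the image of $\Phi_\beta$ must be the whole set of $(p_1+1)(p_2+1)$ local tensor B-splines; that is, $\Phi_\beta$ is always surjective. Local linear independence on $\beta$ is equivalent to $\Phi_\beta$ being injective, hence to $\#\cL_\beta = (p_1+1)(p_2+1)$, proving (1)$\Leftrightarrow$(3). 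A partition of unity without weights on $\beta$ amounts to the positive multiples delivered by $\Phi_\beta$ all being equal to $1$, which happens exactly when $\Phi_\beta$ is bijective, proving (3)$\Leftrightarrow$(4).

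The genuinely structural equivalence is (2)$\Leftrightarrow$(3). For (2)$\Rightarrow$(3), I would argue by contraposition: if $\#\cL_\beta < (p_1+1)(p_2+1)$ for some $\beta$, then $\Phi_\beta$ fails to be injective, and two LR B-splines $B_1 \neq B_2$ in $\cL_\beta$ must map to the same local tensor B-spline. Tracing back the chain of knot insertions that produced $B_1$ and $B_2$ from the initial tensor mesh, the smaller of the two, say $B_2$, is obtained by inserting strictly more knots inside $\supp B_1$ while leaving the common boundary lines at $\beta$ with equal or higher multiplicity, yielding $\supp B_2 \subseteq \supp B_1$ with $\mu_{B_2} \leq \mu_{B_1}$ on the shared boundary, contradicting the \NS~property. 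For (3)$\Rightarrow$(2), suppose $B_2$ is nested in $B_1$ and pick any box $\beta \subseteq \supp B_2$. The knot-insertion expansion of $B_1$ on $\supp B_2$ produces $B_2$ with positive coefficient, and the nestedness condition on boundary multiplicities forces $B_1$ and $B_2$ to restrict on $\beta$ to positive multiples of the same local tensor B-spline; hence $\Phi_\beta$ fails injectivity and $\#\cL_\beta < (p_1+1)(p_2+1)$.

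The main obstacle I expect is the bookkeeping in the direction (2)$\Rightarrow$(3): one has to recover not only $\supp B_2 \subseteq \supp B_1$ but also the correct multiplicity inequality on $\partial \supp B_1 \cap \partial \supp B_2$ from the bare coincidence $\Phi_\beta(B_1) = \Phi_\beta(B_2)$. Here the independence of the LR B-spline set from the insertion ordering, proved in \cite[Theorem 3.4]{tor}, will be crucial: it will let me reorder the insertion history so that $B_1$ and $B_2$ arise from a common ancestor along a single branch of knot insertions, and the meshlines separating them can then be read off directly as those on which $\mu_{B_2}$ exceeding $\mu_{B_1}$ is ruled out.
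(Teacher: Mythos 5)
First, note that the paper does not prove this theorem at all: it is quoted verbatim from \cite[Theorem 4]{bressan2}, so there is no internal proof to compare against and your attempt must stand on its own.

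It does not, because the object on which your entire architecture rests, the assignment $\Phi_\beta$, is not well defined. There is no canonical set of $(p_1+1)(p_2+1)$ ``local tensor B-splines at $\beta$'' such that every $B\in\cL$ with $\supp B\supseteq\beta$ restricts on $\beta$ to a positive multiple of one of them: the restriction of a B-spline to a knot interval depends on its \emph{entire} local knot vector, not merely on which interval of its support $\beta$ occupies. Already in one variable with $p=2$, the B-splines $B[0,1,2,3]$ and $B[0,1,2,4]$ both contain $[1,2]$ as the second interval of their supports, but their restrictions to $[1,2]$ take the endpoint values $(\tfrac12,\tfrac12)$ and $(\tfrac12,\tfrac23)$ respectively, so they are not proportional. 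On an LR mesh the B-splines covering a fixed box do not come from a single global knot sequence, so this situation is generic, and with $\Phi_\beta$ gone all three of your equivalences lose their stated justification. The fix for (1)$\Leftrightarrow$(3) is to drop $\Phi_\beta$ entirely and argue with dimensions: polynomial reproduction survives knot insertion, so the restrictions $\{B|_\beta : B\in\cL,\ \supp B\supseteq\beta\}$ always span the full polynomial space $\mathbb{P}_{p_1,p_2}|_\beta$ of dimension $(p_1+1)(p_2+1)$, whence the count is always $\geq (p_1+1)(p_2+1)$; linear independence on $\beta$ forces it to be $\leq$ that dimension, and conversely equality plus spanning forces a basis, hence independence. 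The equivalences with (2) and (4) are genuinely harder and are not rescued by this: your (2)$\Rightarrow$(3) direction in particular asserts that non-injectivity of $\Phi_\beta$ produces two LR B-splines with $\supp B_2\subseteq\supp B_1$ and the required multiplicity inequality on $\partial\supp B_1\cap\partial\supp B_2$ by ``tracing back'' and ``reordering'' the insertion history, but no mechanism is given for why coincidence of restrictions on one box should force containment of supports, and the multiplicity bookkeeping you flag as the main obstacle is exactly the part left unproved. As it stands the proposal establishes none of the four implications.
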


In the next section we present an algorithm to construct LR meshes with the \NS~property. The resulting LR meshes will furthermore show a nice gradual grading from coarser regions to finer regions, which avoids the thinning in some direction of the box sizes and the placing of small boxes side by side with large boxes. 
\section{The Effective grading refinement strategy}\label{EG}
In this section we present a refinement strategy to generate LR meshes with the \NS~property. We call it \textbf{Effective Grading (EG) refinement strategy} as the finer regions smoothly fade towards the coarser regions in the resulting LR meshes. 

To the best of our knowledge, two other strategies have been proposed to build LR meshes with the \NS~property so far: the \textbf{Non-Nested-Support-Structured (\NNSS) mesh refinement} \cite{N2S2} and the \textbf{Hierarchical Locally Refined (HLR) mesh refinement} \cite{bressan2}. The \NNSS~mesh refinement is a \textbf{function-based} refinement strategy, which means that at each iteration we refine those LR B-splines contributing more to the approximation error, in some norm. The \NNSS~mesh strategy does not require any condition on the LR B-splines selected for refinement to ensure the \NS~property of the resulting LR meshes. On the other hand, no grading has been proved on the final LR meshes and skinny elements may be present on them. 
The HLR refinement is instead a \textbf{box-based} strategy, which means that at each iteration the region to refine is identified by those boxes, in the box-partition induced by the LR mesh, in which a larger error is committed, in some norm. The HLR strategy produces nicely graded LR meshes but it requires that the regions to be refined and the maximal resolution have to be chosen a priori to ensure the \NS~property. Usually one does not know in advance where the error will be large and how fine the mesh has to be to reduce the error under a certain tolerance. Therefore, the conditions for the \NS~property constitute a drawback for the adoption of the HLR strategy in many practical purposes.

The EG refinement is a box-based strategy providing LR meshes very similar to those that one gets with the HLR strategy, when fixing the refinement regions and the number of iterations. As we shall show, the LR meshes generated will always have the \NS~property, with no requirements or assumptions.
\subsection{Preliminary observations and generalized shadow map}\label{sec:EGpre}
In order to introduce the strategy, we need some preliminary considerations on the LR meshes produced by the algorithm. For the sake of simplicity, we assume our domain $\Omega$ to be a square $\Omega = [a,b]^2 \subseteq \RR^2$. Given an LR mesh $\cN =(\cM, \pmb{p},\mu)$ in $\Omega$, generated by several applications of EG strategy, and a box $\beta$ in $\cM$, we define the \textbf{diameter of} $\beta$, denoted by $\diam(\beta)$, as the length of the diagonal of $\beta$. As we shall show in Section \ref{sec:EGdef}, the boxes in $\cM$ are either squares or rectangles with one side twice the other. Furthermore, such boxes are obtained by halving boxes in the previous mesh in one of the two directions, i.e., square boxes are refined in rectangles and rectangular boxes are refined in square boxes. In particular, the width $L$ of the longest side of any given box of $\cM$ has expression
$$
L = \frac{b-a}{2^q} \quad\text{for some }q\in \NN.
$$
This means that $\beta$ is a square box in $\cM$ if and only if
$$
\frac{\diam(\beta)}{L} = \sqrt{2}\text{, that is, if and only if } \frac{(b-a)^2}{\diam(\beta)^2} = 2^{2q-1}.
$$
Whereas, $\beta$ is a rectangular box in $\cM$ if and only if $$
\frac{\diam(\beta)}{L} = \sqrt{\frac{3}{2}}\text{, that is, if and only if } \frac{3(b-a)^2}{\diam(\beta)^2} = 2^{2q+1}.
$$
Hence, given $\diam(\beta)$ we can understand if $\beta$ is a square or a rectangular box by looking at $\frac{3(b-a)^2}{\diam(\beta)^2} \mod 3$:
$$
\beta\text{ is a }\left\{\begin{array}{l}
\text{square box }\iff \frac{3(b-a)^2}{\diam(\beta)^2} \equiv 0 \mod 3,\\\\
\text{rectangular box }\iff \frac{3(b-a)^2}{\diam(\beta)^2} \not\equiv 0 \mod 3,
\end{array}\right.
$$
with the only exception of the square box $\beta = \Omega$, for which $q=0$ and $\frac{3(b-a)^2}{\diam(\beta)^2} = \frac{3}{2}$.

There are two variants of the EG strategy, the ``\emph{Horizontal-major}'' and the ``\emph{Vertical-major}''. In the Horizontal-major version, the boxes of the mesh, at any iteration, are squares or rectangles of width twice the height. Hence, square boxes are refined by halving them horizontally, while rectangular boxes are refined by halving them vertically. In the Vertical-major case it is the opposite: squares are refined in rectangles of height twice the width, by halving them vertically, and rectangular boxes are refined in square boxes, by halving them horizontally. In Figure \ref{fig:HVvariants} we compare the two variants by refining along the same ``bean curve'', using bidegree $(2,2)$ and $8$ levels of refinement in each direction. 
\begin{figure}
\centering
\subfloat[$8001$ LR B-splines]{\includegraphics[width=.333\textwidth]{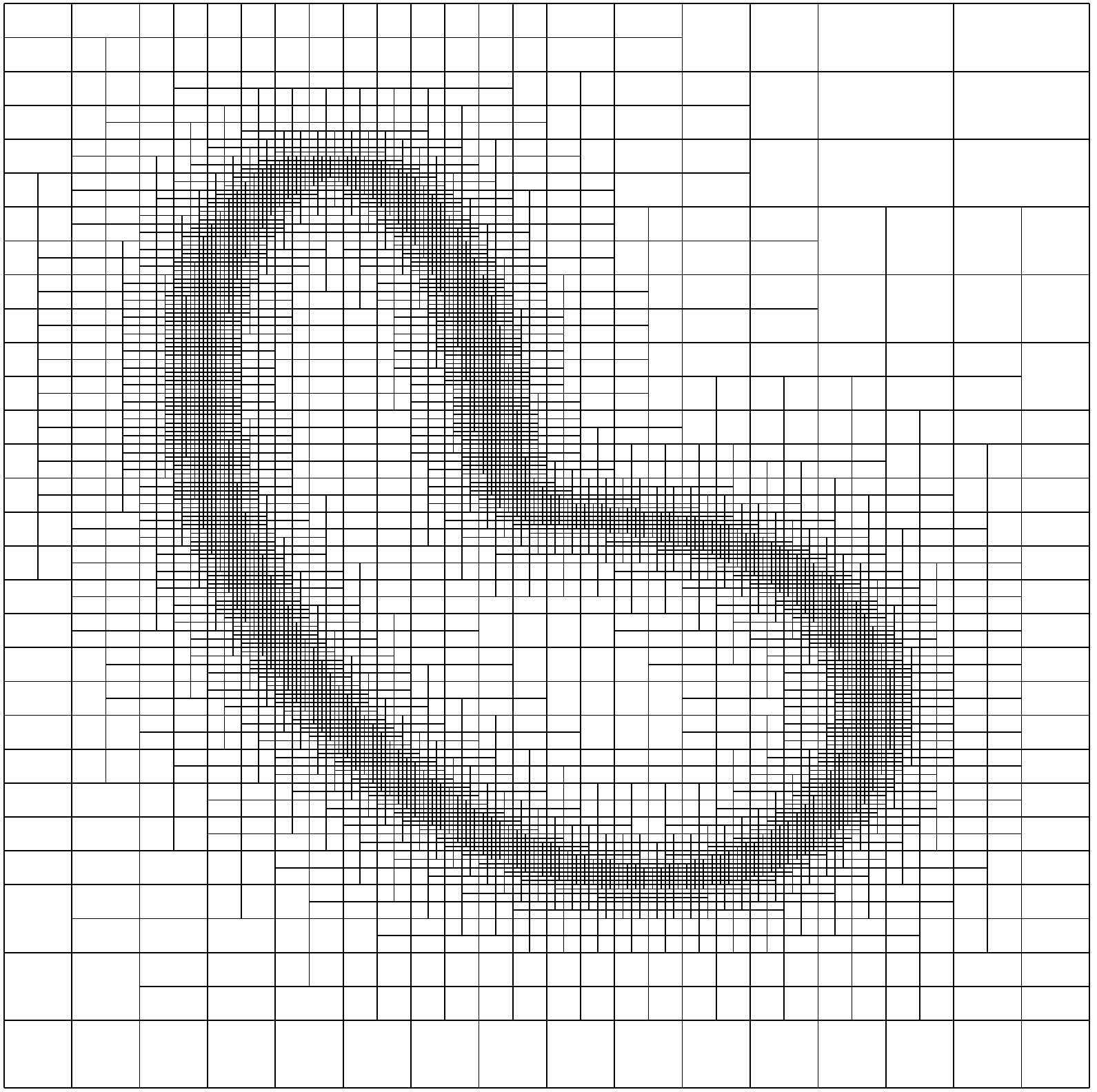}}\qquad
\subfloat[$7821$ LR B-splines]{\includegraphics[width=.333\textwidth]{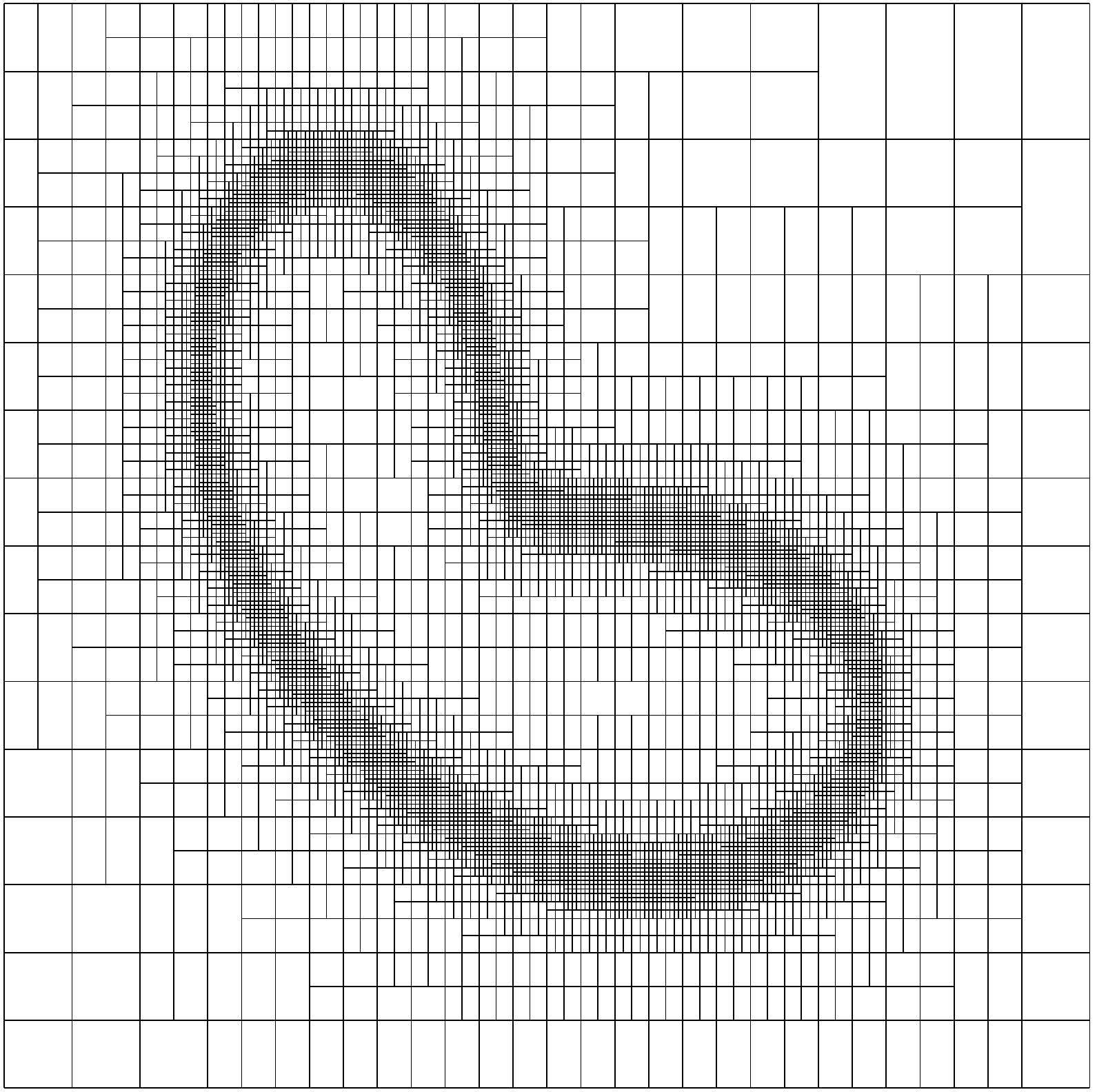}}
\caption{Comparison of the Horizontal-major and Vertical-major versions of the EG strategy. We consider bidegree $(2,2)$ and refine along a ``bean curve'' with $8$ levels of resolution in each direction. In (a) the Horizontal-major variant of the strategy and in (b) the Vertical-major variant of the strategy. In the former the rectangular boxes have width twice the height. In the latter the rectangular boxes have height twice the width. We also show the number of LR B-splines defined in the meshes. Obviously the two cardinalities are different because the two meshes are not equal after flipping the axes, due to the asymmetry of the curve.}\label{fig:HVvariants}
\end{figure}

In the description of the EG strategy in Section \ref{sec:EGdef}, we will just use the verb ``to halve'', without specifying the direction, to treat the two variants at the same time. 

Let $\beta$ be a square box in the mesh of diameter $\diam(\beta) = d$. $\beta$ has been obtained by halving a box of diameter $$d' = \sqrt{\frac{5}{2}}d.$$ Instead, if $\beta$ is a rectangular box, it has been obtained by halving a box of diameter $$d' = 2\sqrt{\frac{2}{5}}d.$$ In the description of the EG strategy in Section \ref{sec:EGdef} we will denote by $s$ the scaling factor to express $d'$ in terms of $d$, i.e., $d' = sd$, independently of the shape of the box at hand. 

Finally, we introduce the \textbf{generalized shadow map} of a set $A$ in $\Omega$. As opposed to the shadow map \cite[Definition 10]{bressan2} which is defined for tensor meshes, the generalized shadow map can be applied in locally refined meshes. The latter is consistent with the former, that is, the two maps are equivalent, when the underlying LR mesh is a tensor mesh and $A$ consists of a bunch of boxes of the mesh, as we shall show in the appendix of this paper. Given an LR mesh $\cN=(\cM, \pmb{p},\mu)$ and a set $A$ in $\Omega$, the generalized shadow map of $A$ in $\cN$ defines a superset of $A$ which is larger only along one of the two directions, as follows. We present only the horizontal shadow map for briefness, the procedure for the vertical is analogous. For the sake of simplicity, let us assume first that $A$ has only one connected component. For any point $\pmb{q} \in \partial A$ we consider the two horizontal half-lines from $\pmb{q}$, $r^1$ and $r^2$.
Let $\pmb{q}_1^i, \ldots, \pmb{q}_{N_i}^i$ be the intersection points of $r^i$ with the vertical meshlines of $\cM$ (counting their multiplicites), where $\pmb{q}_1^i$ is the closest to $\pmb{q}$ and $\pmb{q}_{N_i}^i$ the farthest. In particular, note that if $\pmb{q}$ lies on a vertical line of $\cM$, then $\pmb{q}_1^i = \pmb{q}$. We define 
\begin{equation}\label{q*}
\pmb{q}_*^i := \left\{\begin{array}{ll}
\pmb{q}_{p_1+1}^i & \text{if }N_i\geq p_1+1,\\\\
\pmb{q}_{N_i}^i & \text{otherwise.}
\end{array}\right.
\end{equation}
The (horizontal) generalized shadow of $A$ with respect to $\cN$, denoted by $\cS A$, are the boxes of $\cM$ intersecting the points in the segments $\overline{\pmb{q}_*^1\pmb{q}_*^2}$ for $\pmb{q} \in \partial A$ or the points in $A$, that is, 
$$
\cS A := \left\{\beta \text{ box of }\cM\,:\, \beta \cap \left(A \cup \left(\bigcup_{\pmb{q} \in \partial A} \overline{\pmb{q}_*^1\pmb{q}_*^2}\right)\right)\neq \emptyset\right\}.
$$
If $A$ has more connected components, $A_1, \ldots, A_M$, then the generalized shadow $\cS A$ will be the union of the generalized shadows of the connected components:
$$
\cS A := \bigcup_{j=1}^M \cS A_j.
$$
\begin{figure}
\centering
\subfloat[]{
\includegraphics[width=.22\textwidth]{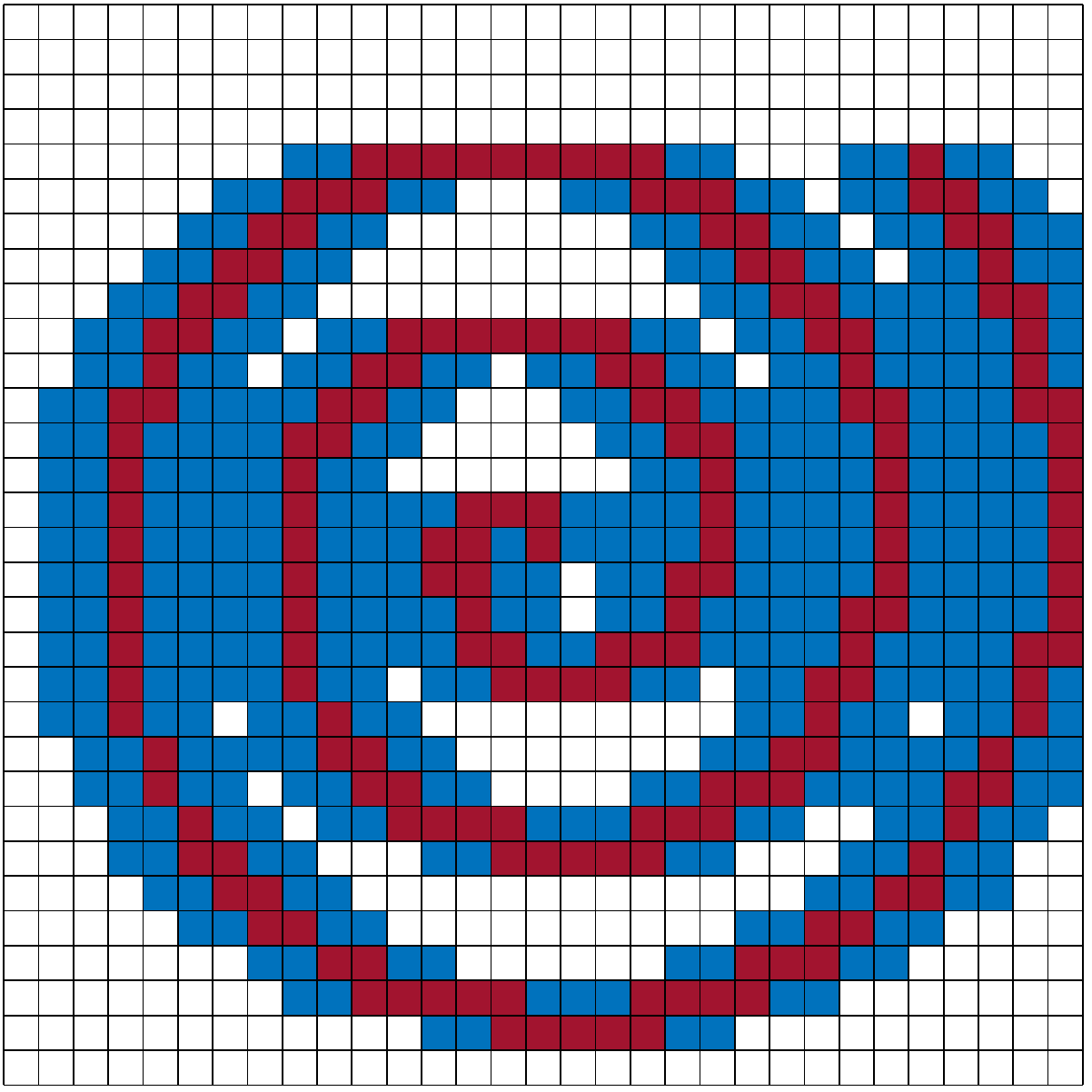}}\quad
\subfloat[]{
\includegraphics[width=.22\textwidth]{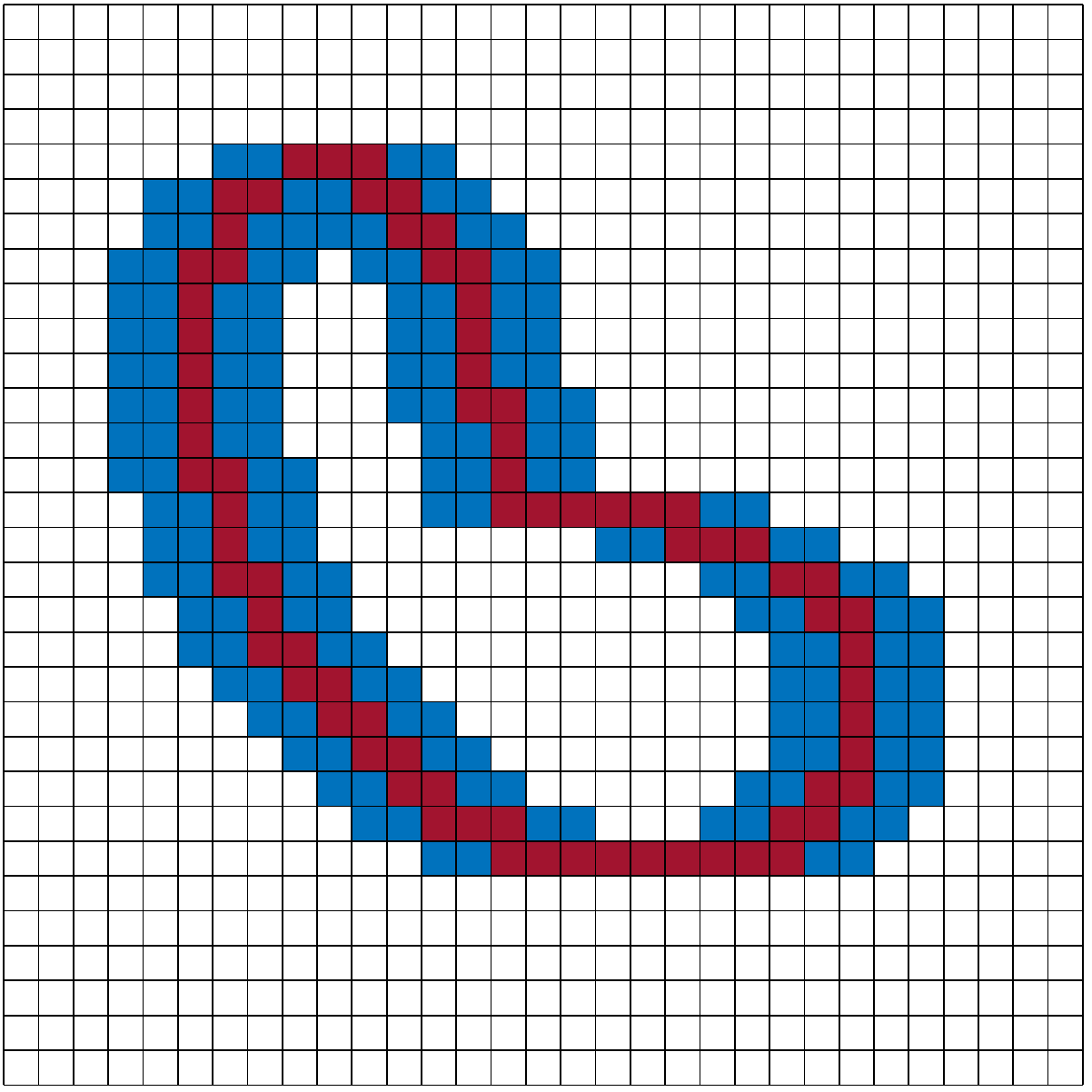}}\quad
\subfloat[]{
\includegraphics[width=.22\textwidth]{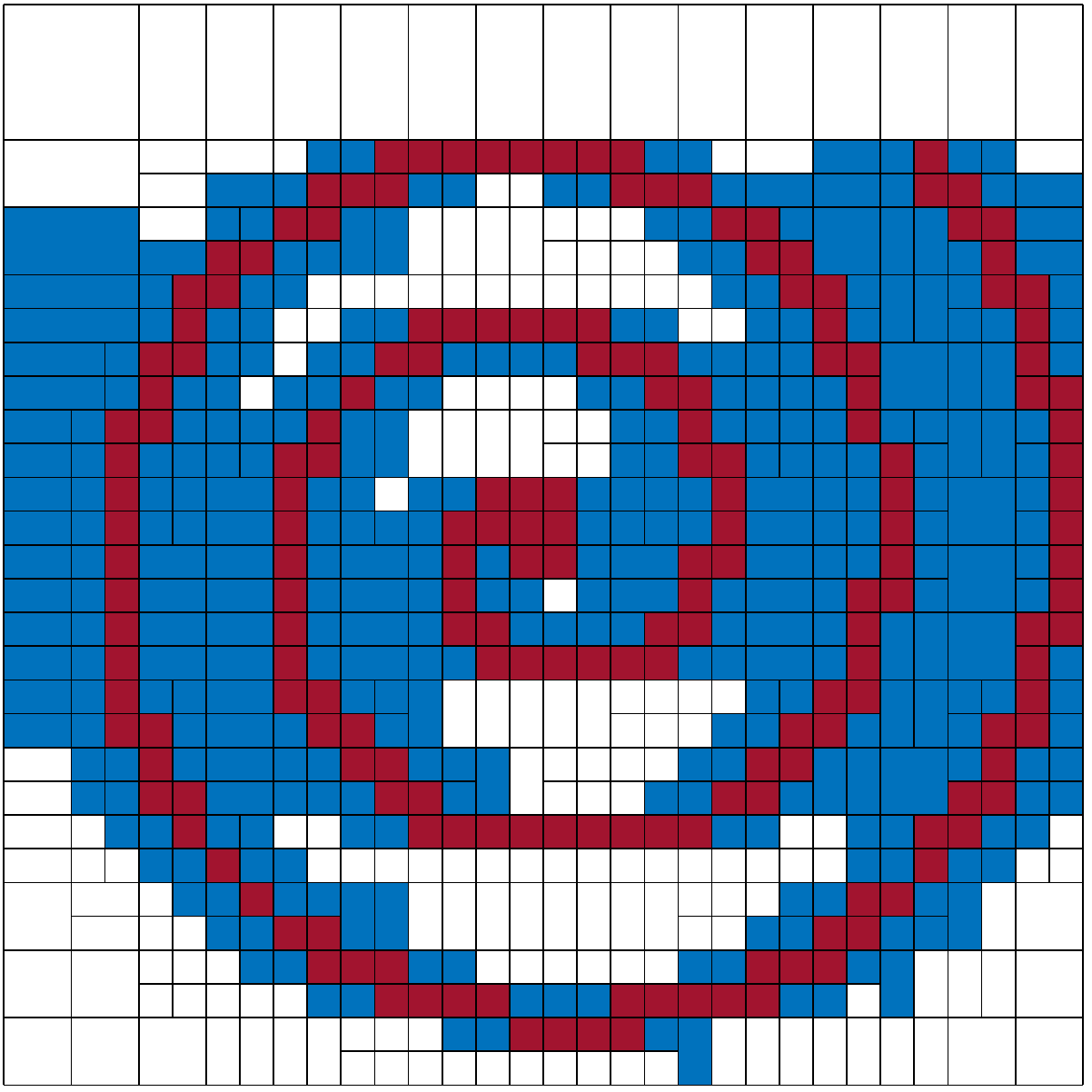}}\quad
\subfloat[]{
\includegraphics[width=.22\textwidth]{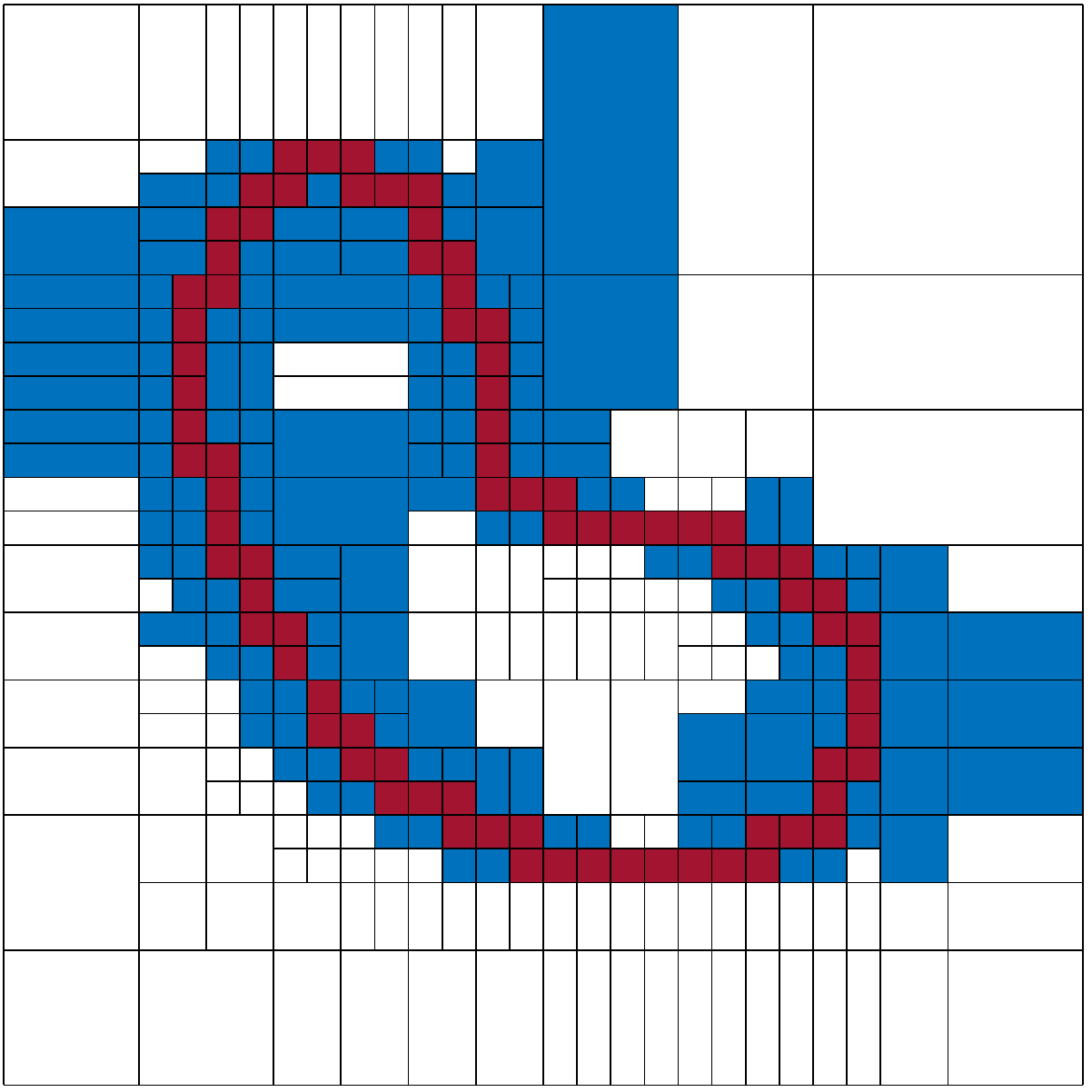}}
\caption{Examples of a horizontal generalized shadow map of different sets. All the meshlines in all the meshes have multiplicity one and $p_1 =2$. The red regions are the sets considered and the unions of the red regions and the blue regions are the shadow of them (we refer to the online version of the paper for the colors). In (a)--(b) the underlying mesh is a tensor mesh. In (c)--(d) we consider LR meshes built using the \emph{minimum span} strategy, proposed in \cite{johannessen}, with bidegree $(2,2)$ to emphasize the difference of the generalized shadow map on meshes with local insertions with respect to the tensor case.}\label{shadows}
\end{figure}
In Figure \ref{shadows} we show four examples of horizontal generalized shadow maps for three different sets and degree $p_1=2$. In particular the sets considered are unions of boxes of the underlying mesh. We made this choice because these are the kind of sets considered for refinement in practice.

In the EG strategy we will apply the generalized shadow map to sets composed of boxes of the same size and shape in the mesh. The direction of the shadow will be established by such shape: if the boxes are rectangles then the shadow is in the same direction of the long edges, if they are squares then it is in the other direction. 

\subsection{Definition of the strategy and proof of the \NS~property}\label{sec:EGdef}
Given a region $\omega \subseteq \Omega$ composed of a set of boxes to be refined, the EG strategy can be divided in two macro steps. In the first step new lines are inserted in order to refine $\omega$. As we shall show, these lines halve boxes of the same shape and size and therefore they are all in the same direction, as we explained in Section \ref{sec:EGpre}. The new line insertions will in general spoil the \NS~property of the mesh. In the second step of the EG strategy we reinstate the \NS~property by suitably extending lines that were already on the mesh before such new insertions. This approach, of dividing the strategy into ``refining step'' and ``\NS~property recovering step'', was already adopted in \cite{N2S2} for the \NNSS~ mesh refinement. As it will be clear, restoring the \NS~property will also provide nice grading properties in the final mesh.

The refining step works as follows. Let $\cN = (\cM, \pmb{p}, \mu)$ be the LR mesh at hand, provided by several iterations of EG strategy, and let $\cL$ be the corresponding set of LR B-splines. We define the subset $\cL_\omega \subseteq \cL$ as the set of those LR B-splines whose support is intersecting region $\omega$. Then we compute the maximum of $\diam(\beta)$ over all the LR B-splines $B \in \cL_\omega$ and all the boxes $\beta$ in the tensor meshes $\cM_B$ associated to the knot vectors of $B$. We halve such maximal boxes. As all of them have same diameter, the new lines have all same direction. This concludes the refining step. The new lines inserted and the new extensions, provided by the re-establishing of the \NS~property, trigger a refinement in the LR B-spline set $\cL$. We finally update $\omega$ by removing those boxes of it that have been refined (if any). We repeat the procedure until all the boxes in $\omega$ have been halved. The scheme of the EG refinement strategy is given in Algorithm \ref{alg:EG}.
\begin{algorithm}[h!]
\linespread{1.35}\selectfont
\Do{$\omega \neq \emptyset$}{
Set $\cL_\omega = \bigcup\{B \in \cL\,:\,\int(\supp B) \cap \omega \neq \emptyset\}$\;
Set $D = \max_{B \in \cL_\omega}\max_{\beta \in \cM_B} \diam(\beta)$\;
Update $\cM$ by halving the boxes $\beta \in \cM_B$ with $\diam(\beta) = D$ and $B \in \cL_\omega$\;
Reinstate the \NS~property and grading after the refinement, $\cN \leftarrow$ \texttt{EGgrader($\cN$)}\;
Update $\cL$\;
Update $\omega$ by removing the boxes of it that have been refined\;
}
\caption{EG strategy iteration, $(\cL,\cN) \leftarrow$ \texttt{EGstrategy($\cL$,$\cN$,$\omega$)}}\label{alg:EG}
\end{algorithm}

When we recover the \NS~and grading properties we make sure that the shadow of each box of diameter $d$ in the mesh contains only boxes of diameter $sd$ or smaller, with the scaling value $s$ defined as in Section \ref{sec:EGpre}. We proceed from the boxes with the smallest diameter to those with the largest diameter. The input is the LR mesh $\cN = (\cM, \pmb{p}, \mu)$ obtained after the refining step. Let $\cE$ be the set of boxes of $\cM$. At first, we set $d$ as the diameter of the smallest boxes in $\cM$ and $\cE_d \subseteq \cE$ as the set of boxes with diameter $d$. For each of such boxes $\beta \in \cE_d$ we check if there is a $\beta' \in \cS \beta$ with $\diam(\beta') > sd$. If this is the case, we halve the closest to $\beta$ of such larger boxes and we update the shadow of $\beta$. We iterate this procedure until all the boxes in $\cS \beta$ have diameter at most $sd$. After that, the next extensions will involve only boxes of diameter $sd$ or larger. Hence, we remove $\cE_d$ from $\cE$, we update $d$ as the smallest diameter of the boxes in such new collection. We iterate the procedure until $\cE$ becomes empty. 

The \NS~property restoring step is schematized in Algorithm \ref{alg:EGgrader}.
\begin{algorithm}[h!]
\linespread{1.35}\selectfont
Set $\cE$ as the set of boxes in $\cM$\;
\While{$\cE \neq \emptyset$}{
Set $d = \min_{\beta \in \cE} \diam(\beta)$ and $\cE_d = \{\beta\in \cE \,:\, \diam(\beta) = d\}$\;
\For{all $\beta \in \cE_d$}{
\While{$\exists\, \beta' \in \cS \beta$ with $\diam(\beta') > sd$}{
Update $\cM$ by halving the closest of such $\beta'$ to $\beta$\;
Update $\cE$\;
Update $\cS \beta$\;
}}
Update $\cE$ by removing $\cE_d$, $\cE \leftarrow \cE \setminus \cE_d$\;
}
\caption{Restoring \NS~and grading properties, $\cN \leftarrow$ \texttt{EGgrader($\cN$)}}\label{alg:EGgrader}
\end{algorithm}
In Figure \ref{fig:exEGiteration} we visually represent the steps of an iteration of the EG refinement on a given LR mesh.
\begin{figure}
\centering
\subfloat[]{\includegraphics[width=.3\textwidth]{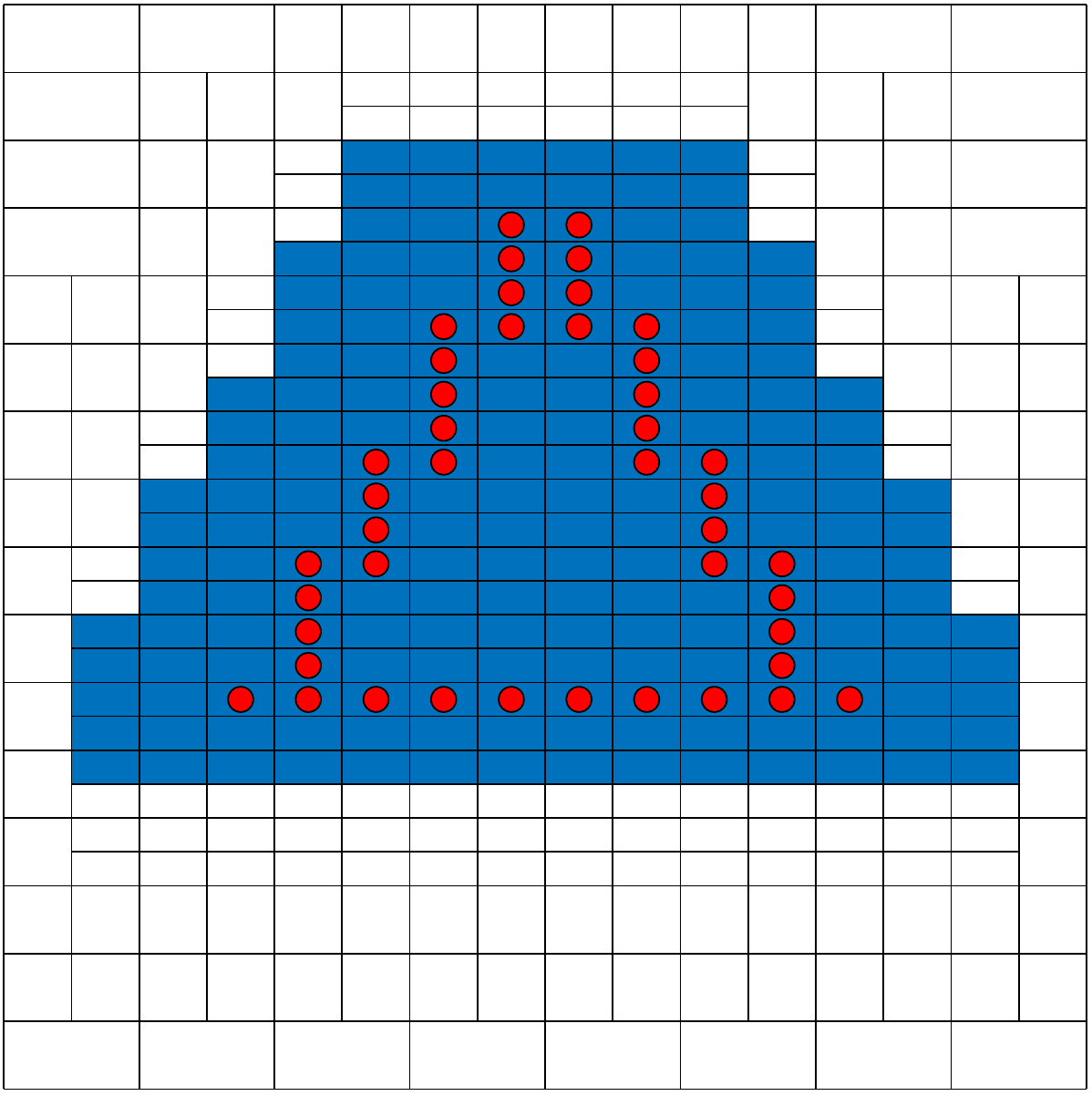}}\quad
\subfloat[]{\includegraphics[width=.3\textwidth]{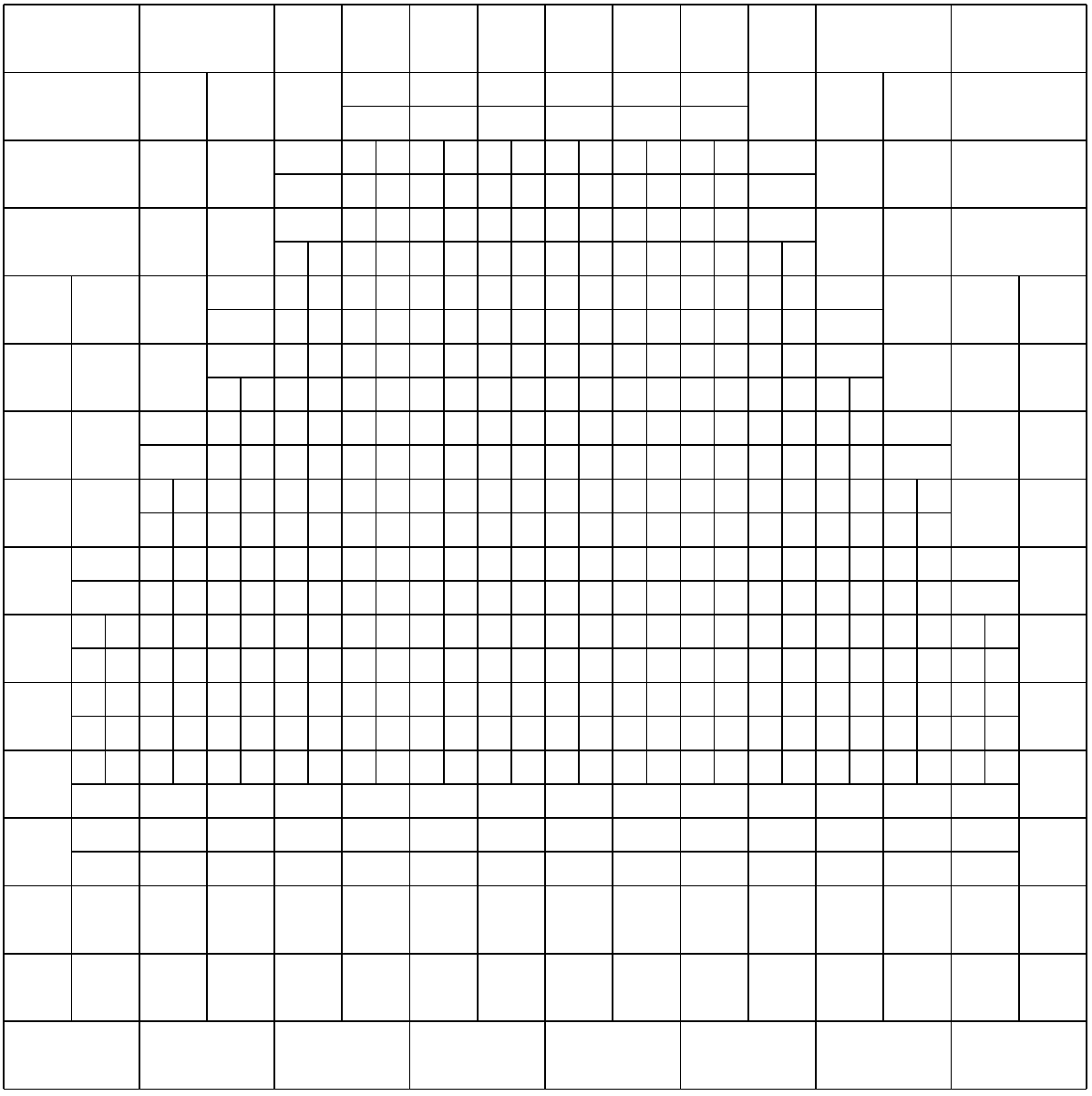}}\quad
\subfloat[]{\includegraphics[width=.3\textwidth]{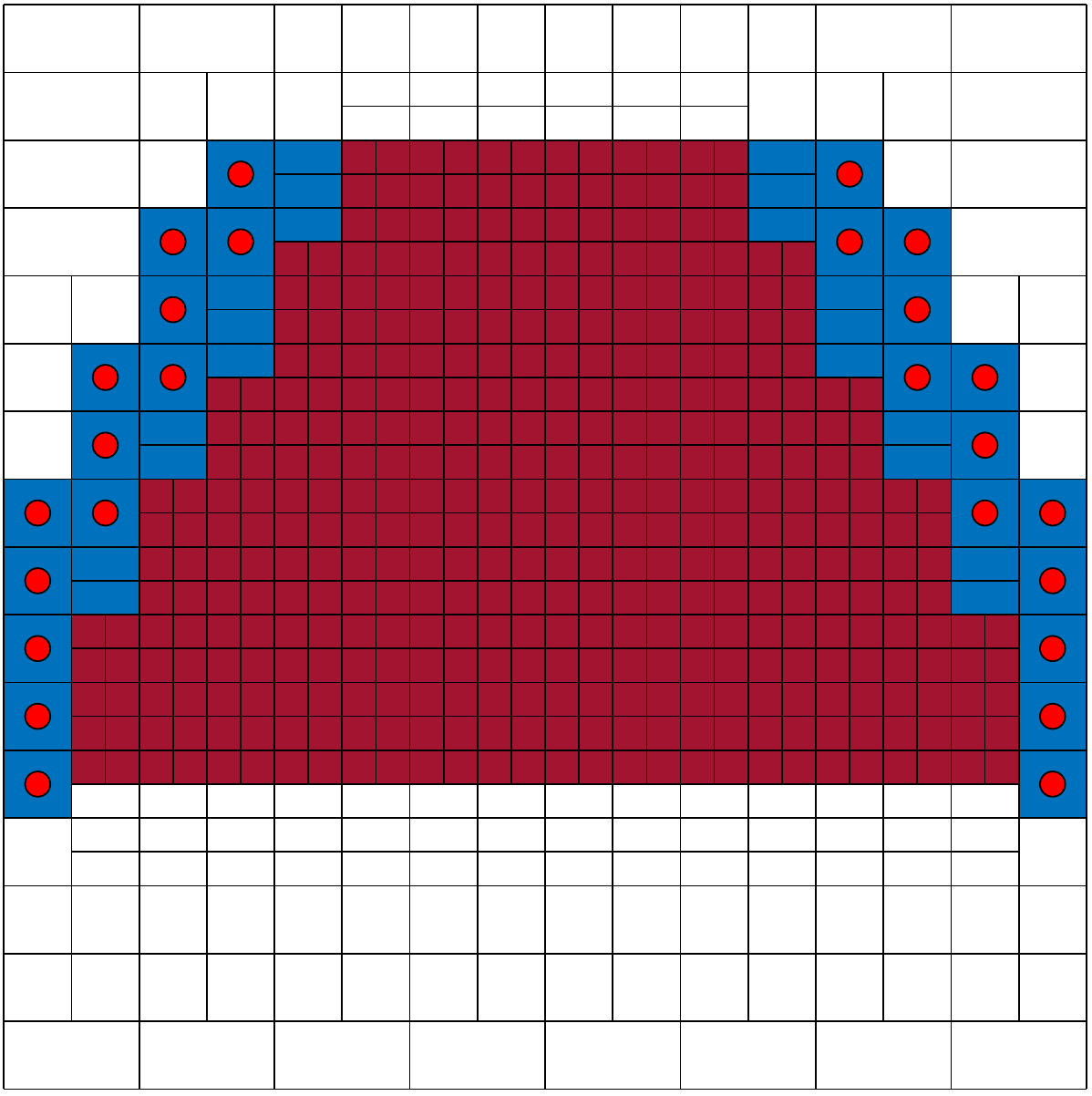}}\\
\subfloat[]{\includegraphics[width=.3\textwidth]{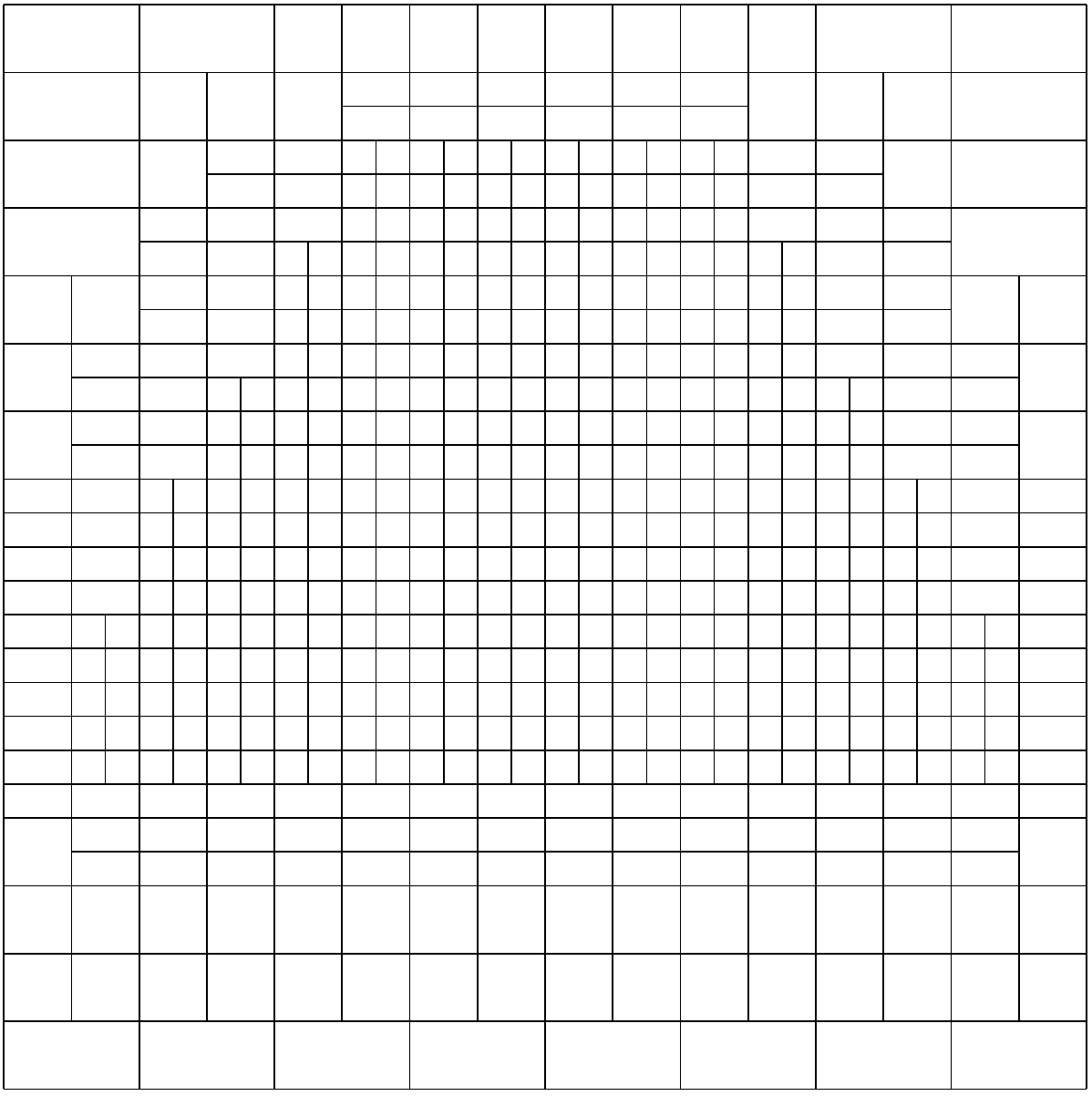}}\quad
\subfloat[]{\includegraphics[width=.3\textwidth]{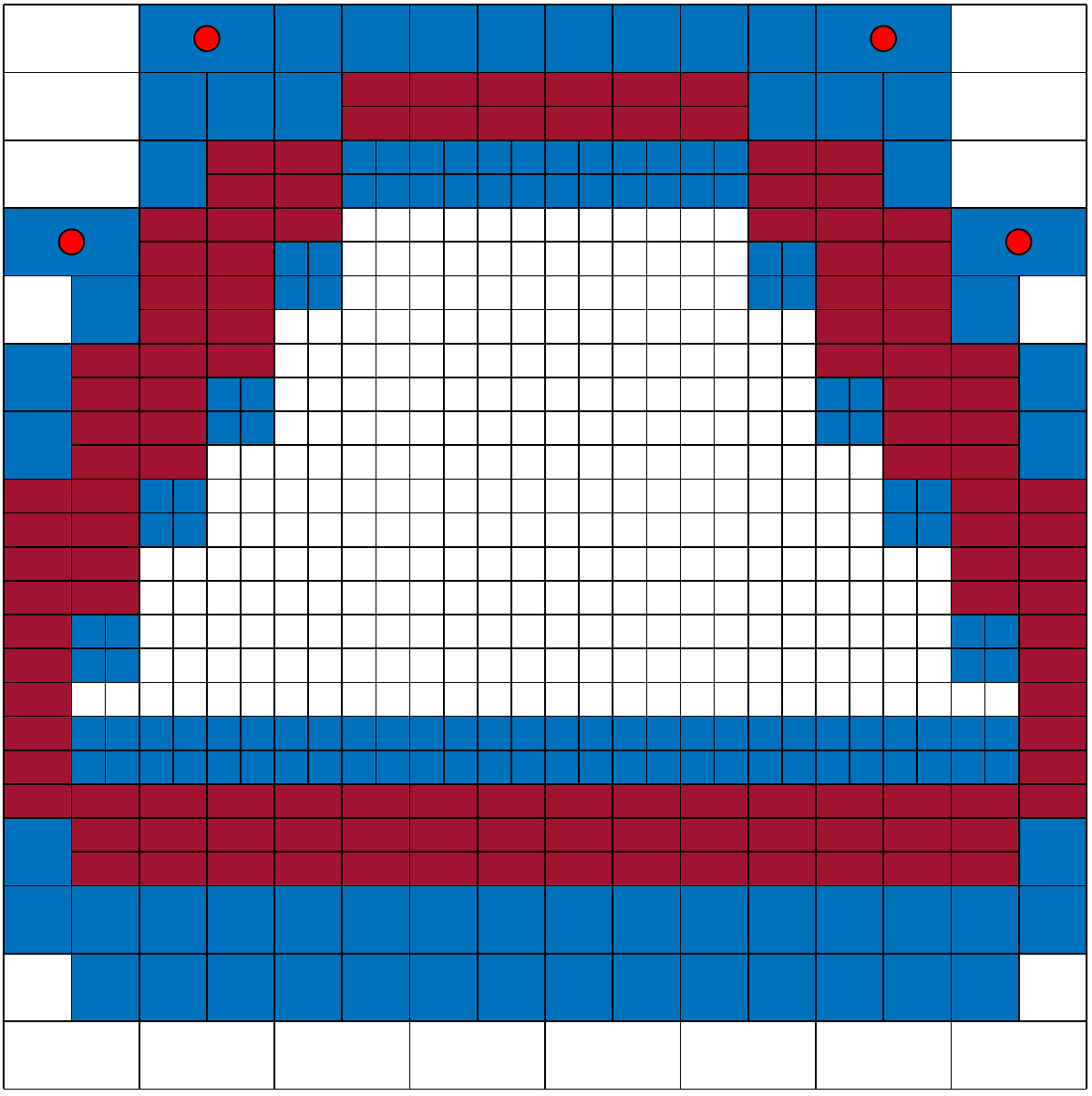}}\quad
\subfloat[]{\includegraphics[width=.3\textwidth]{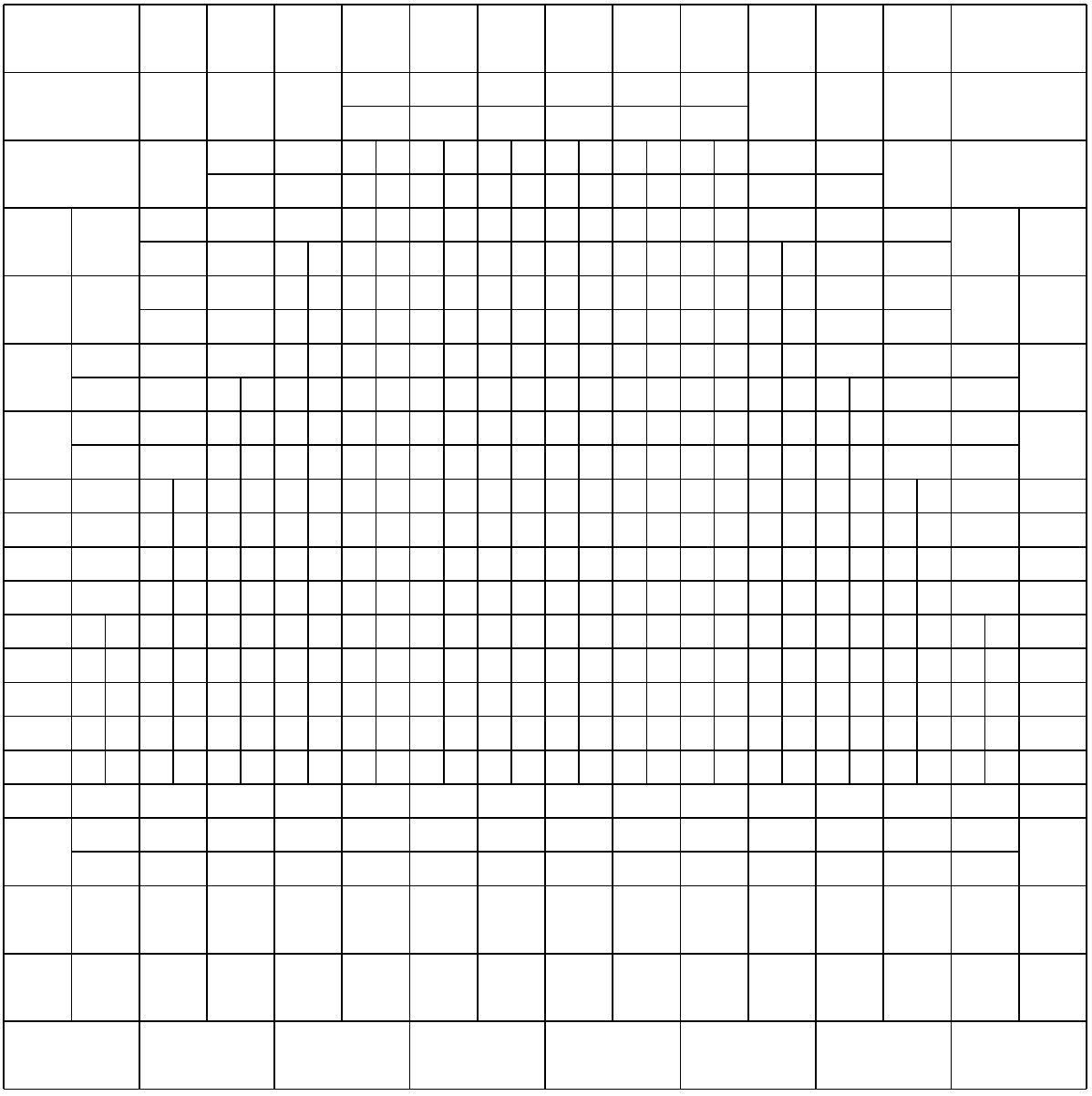}}
\caption{Example of EG strategy iteration. The input is the LR mesh pictured in figure (a) and a set of boxes marked for refinement, highlighted with dots. We collect all the LR B-splines on the mesh whose support intersects the marked boxes. The region given by the union of their supports is colored in figure (a). We halve the boxes of largest diameter in their support, in this case all the boxes in the colored region, and we get the LR mesh shown in figure (b). Such LR mesh may have not the \NS~property. We reinstate it as follows. We consider the smallest boxes on the mesh and we compute the generalized shadow of such region. We consider the Horizontal-major variant of the strategy for this example, therefore such shadow is horizontal, as shown in figure (c). We mark for refinement those boxes in the shadow that are too large as explained in Algorithm \ref{alg:EGgrader}. These boxes are highlighted with a dot in figure (c). We halve those of them that are closer to the region and we update the shadow. At the end of the process we have the mesh in figure (d). We iterate the procedure over all the boxes from the smaller to the larger. The next boxes considered are those reported in figure (e) together with their (vertical) generalized shadow. The boxes marked with a dot are those that need to be halved. The final LR mesh is reported in figure (f).}\label{fig:exEGiteration}
\end{figure}
We remark that the LR meshes produced by the EG strategy have boundary meshlines of full multiplicity and internal meshlines of multiplicity 1. 

In order to prove that such LR meshes have the \NS~property, we rely on the following result \cite[Theorem 11]{bressan2}. Let $\{\cN_\ell^T = (\cM^\ell, \pmb{p}, \mu)\}_{\ell\in\NN}$ be a sequence of tensor meshes with $\cM_0^T$ the boundary of $\Omega$ and $\cM_\ell^T$ obtained by halving the boxes in $\cM_{\ell-1}^T$, alternating the directions of such splits. Let $\Omega_\ell\subseteq \Omega$ be a union of boxes in $\cM_\ell^T$. Then \cite[Theorem 11]{bressan2} states that if an LR mesh $\cN =(\cM,\pmb{p},\mu)$ can be written as $\cM = \cup_{\ell \leq L} \cM_{\ell}^T|_{\Omega_\ell}$ and the sequence $\{\Omega_\ell\}_{\ell \leq L}$ is such that $\Omega_{\ell-1}\supseteq \cS \Omega_\ell$, then $\cN$ has the \NS~property. 
We now show that the LR meshes produced by the EG strategy satisfy the hypotheses of \cite[Theorem 11]{bressan2}.

\begin{thm}\label{N2Sproperty}
Let $\cN = (\cM,\pmb{p},\mu)$ be an LR mesh obtained via several iterations of the EG strategy. Then $\cN$ has the \NS~property.
\end{thm}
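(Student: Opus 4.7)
The plan is to apply Bressan's criterion \cite[Theorem 11]{bressan2}, recalled in the paragraph preceding the statement: it suffices to exhibit a dyadic tensor hierarchy $\{\cN_\ell^T\}_{\ell\leq L}$ and a decreasing sequence $\{\Omega_\ell\}_{\ell\leq L}$ of unions of boxes of $\cM_\ell^T$ with $\Omega_0 = \Omega$, $\cM = \bigcup_{\ell\leq L} \cM_\ell^T|_{\Omega_\ell}$, and $\Omega_{\ell-1} \supseteq \cS \Omega_\ell$ for every $\ell\geq 1$. By the structural observations in Section \ref{sec:EGpre}, every box of $\cM$ has longest side $(b-a)/2^q$ for some $q\in\NN$ and is obtained by uniform halving of its parent, with the direction alternating from level to level. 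I therefore assign to each $\beta\in\cM$ a level $\ell(\beta)\in\NN$ equal to the number of halvings needed to reach its shape and size from $\Omega$, take $\cN_\ell^T$ to be the uniform $\ell$-fold halving of $\Omega$ (following the Horizontal- or Vertical-major alternation of the chosen variant), and set
$$
\Omega_\ell \defeq \bigcup\{\beta\in\cM \,:\, \ell(\beta)\geq\ell\},\qquad \ell=0,\ldots,L,
$$
with $L=\max_{\beta\in\cM}\ell(\beta)$. The sequence is decreasing, $\Omega_0=\Omega$, and each level-$\ell$ box of $\cM$ coincides with a box of $\cM_\ell^T$ lying in $\Omega_\ell\setminus\Omega_{\ell+1}$, giving $\cM=\bigcup_{\ell\leq L}\cM_\ell^T|_{\Omega_\ell}$.

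The remaining point is the shadow nesting $\Omega_{\ell-1}\supseteq\cS\Omega_\ell$. I invoke the invariant that Algorithm \ref{alg:EGgrader} enforces on termination: for every box $\beta\in\cM$ of diameter $d$, every $\beta'\in\cS\beta$ satisfies $\diam(\beta')\leq s\,d$, equivalently $\cS\beta\subseteq\Omega_{\ell(\beta)-1}$. For a connected component $A$ of $\Omega_\ell$, the identities $A=\bigcup_{\beta\subseteq A}\beta$ and $\partial A\subseteq\bigcup_{\beta\subseteq A}\partial\beta$, together with the definition of the generalized shadow, give
$$
\cS A \;\subseteq\; \bigcup_{\beta\subseteq A}\cS\beta \;\subseteq\; \bigcup_{\beta\subseteq A}\Omega_{\ell(\beta)-1} \;\subseteq\; \Omega_{\ell-1},
$$
since $\ell(\beta)\geq\ell$ for $\beta\subseteq A$ and the $\Omega_k$ are monotone in $k$. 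Taking the union over the components of $\Omega_\ell$ yields $\cS\Omega_\ell\subseteq\Omega_{\ell-1}$, and Bressan's theorem concludes.

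The main obstacle is to establish and preserve the per-box grading invariant used in the second paragraph. One must verify that processing boxes in order of increasing diameter and, at each stage, halving the $\cS\beta$-violator closest to $\beta$ terminates with $\diam(\beta')\leq s\cdot\diam(\beta)$ holding uniformly over all $\beta\in\cM$; that the new halving lines inserted in the refining step of Algorithm \ref{alg:EG} do not corrupt the invariant for larger-diameter boxes before EGgrader is called again; and that the direction of $\cS\beta$ (determined by $\beta$'s shape) interacts correctly with the direction along which an oversized $\beta'\in\cS\beta$ is halved (determined by $\beta'$'s shape), so that the split produces boxes whose diameters are at most $s\cdot\diam(\beta)$ and never reintroduce a violator into $\cS\beta$.
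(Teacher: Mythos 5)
Your proposal is correct and follows essentially the same route as the paper: both reduce to Bressan's criterion \cite[Theorem 11]{bressan2} by taking $\Omega_\ell$ to be the union of boxes of diameter at most the level-$\ell$ value and deducing the nesting $\cS\Omega_\ell \subseteq \Omega_{\ell-1}$ from the postcondition enforced by Algorithm \ref{alg:EGgrader}. Your passage from the per-box invariant to the shadow of the whole region via $\cS A \subseteq \bigcup_{\beta\subseteq A}\cS\beta$ is slightly more explicit than the paper's, which asserts the containment for $\cS\Omega^d$ directly, and the verification issues you flag at the end are likewise left implicit in the paper.
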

\begin{proof}
Let $d$ be the minimal diameter over all the boxes of $\cM$. Let $\Omega^d\subseteq \Omega$ be the region composed of all the boxes in $\cN$ of diameter $d$. Let $d' = sd$ and $\Omega^{d'}\subseteq \Omega$ be the region made of boxes of diameter $d'$ or smaller. In the \NS~property restoring step of the EG strategy (Algorithm \ref{alg:EGgrader}) we make sure that only boxes of diameter $sd$ or smaller are in $\cS\Omega^d$. Therefore, $\Omega^{d'} \supseteq \cS \Omega^d$. By iterating this procedure, replacing $d$ with $d'$ until $\Omega^{d'} = \cS\Omega^d = \Omega$, we get a sequence $\{\Omega^d\}_d$ for which $\Omega^{d'} \supseteq \cS \Omega^d$. Furthermore,
by recalling that the boxes of diameter $d$ are obtained by halving boxes of diamter $d'$, it is clear that the sequence $\{\Omega^d\}_d$ corresponds to a sequence $\{\Omega_\ell\}_{\ell\leq L}$ as that considered in \cite[Theorem 11]{bressan2} and $\cM = \cup_{\ell \leq L} \cM_\ell^T|_{\Omega_\ell}$. This proves that $\cN$ has the \NS~property thanks to \cite[Theorem 11]{bressan2}.
\end{proof}

In Figures \ref{fig:EGex1}--\ref{fig:EGex2} we show iterations of the EG strategy and the adaptivity of it. From LR meshes obtained by performing 14 iterations (7 vertical and 7 horizontal insertions) of the EG strategy localized on some regions, we change completely the curve along which we perform further refinements. All the meshes shown (and many more) have been tested for the \NS~property to confirm the theoretical result of Theorem \ref{N2Sproperty}.
\begin{figure}
\begin{tikzpicture}
\matrix (m)[matrix of math nodes,column sep=2em,row sep=2em]{
\includegraphics[width=.3\textwidth]{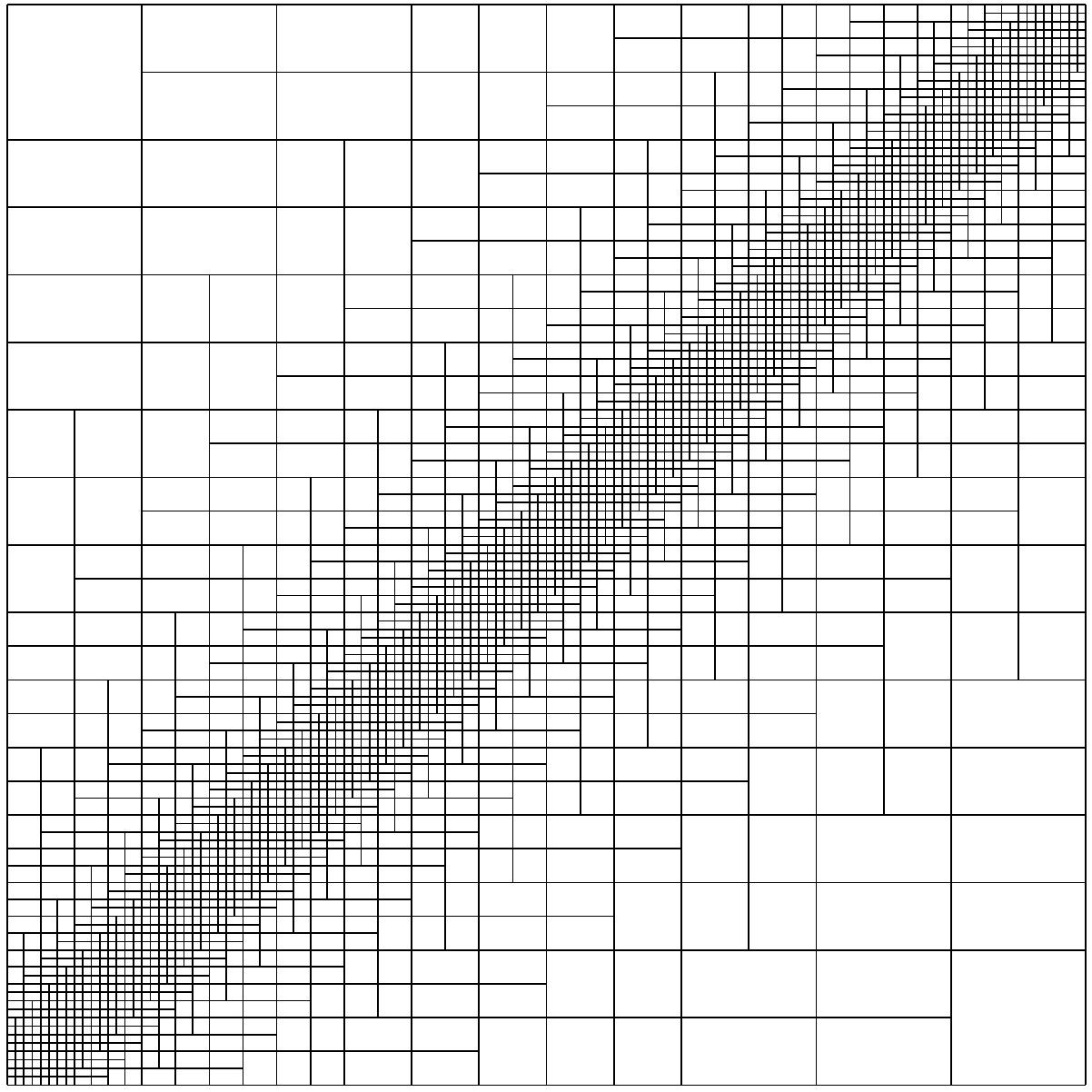}\pgfmatrixnextcell\includegraphics[width=.3\textwidth]{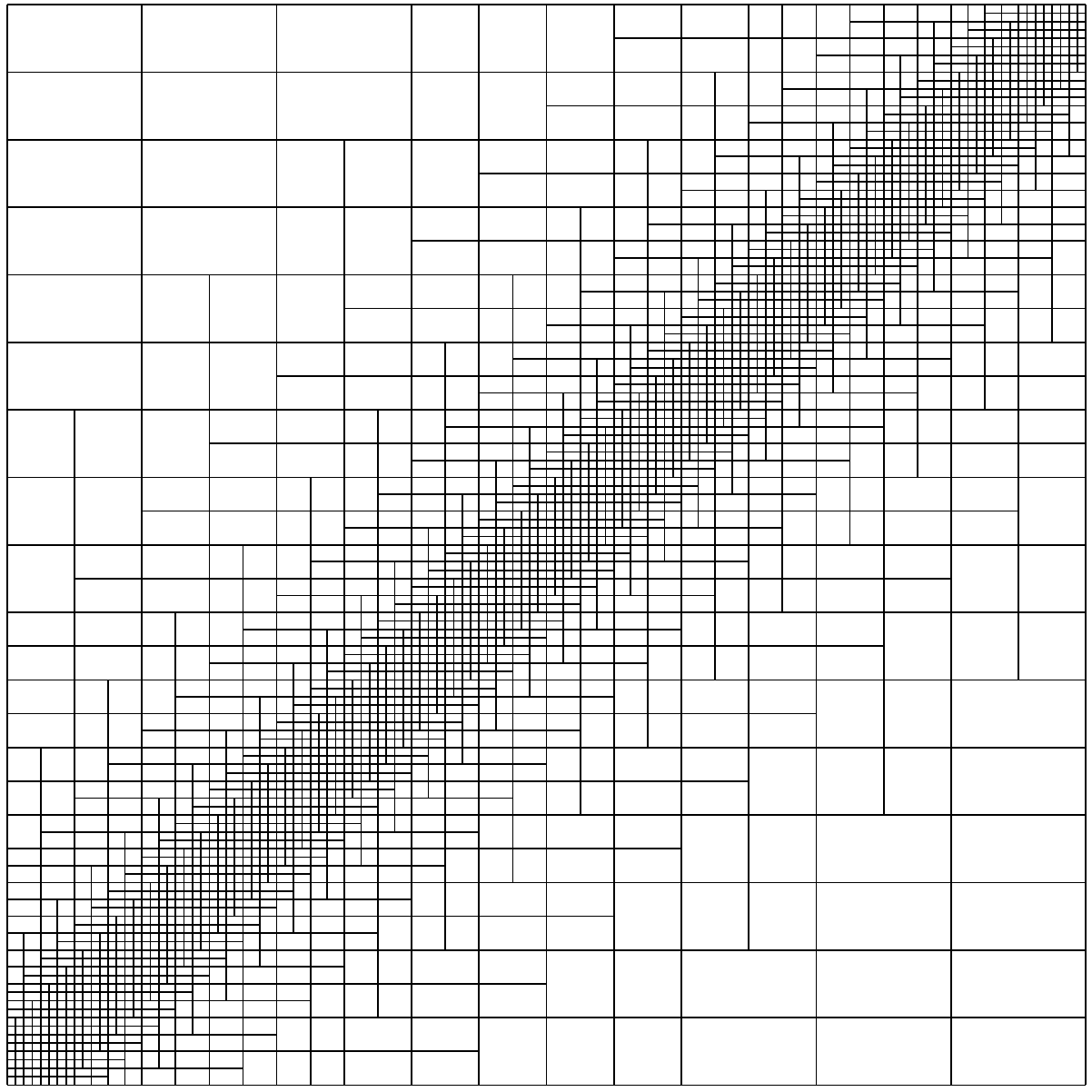}\pgfmatrixnextcell\includegraphics[width=.3\textwidth]{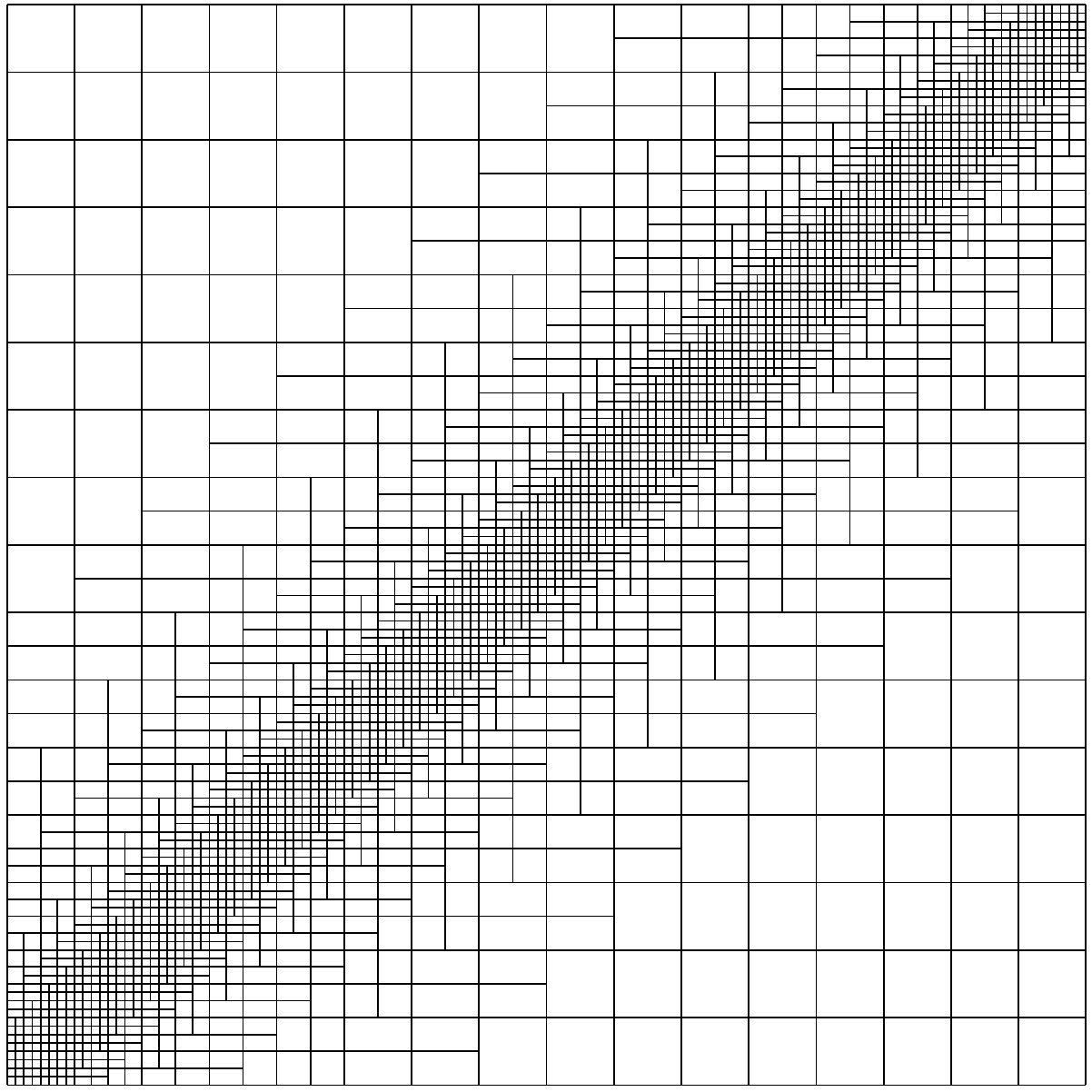}\\
\includegraphics[width=.3\textwidth]{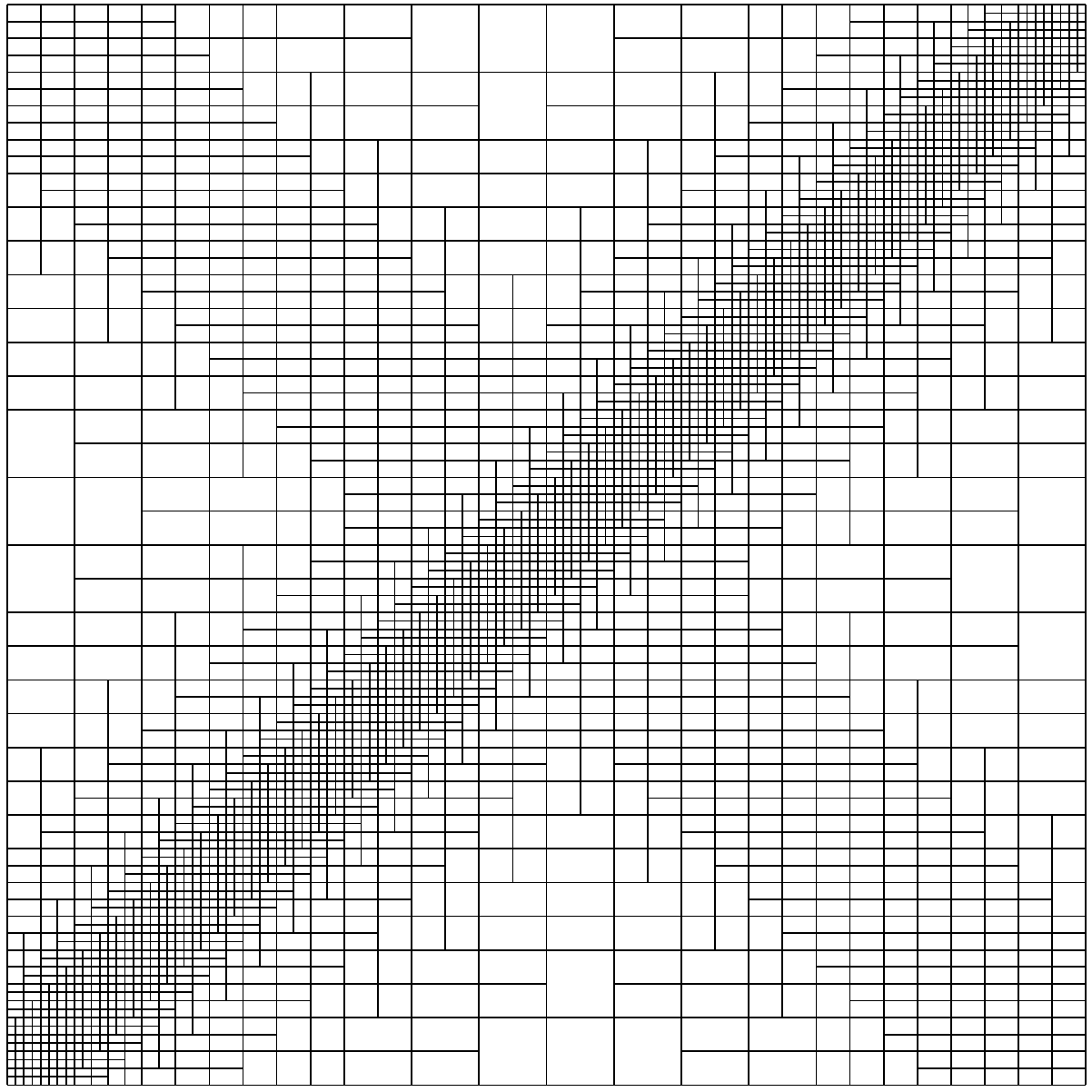}\pgfmatrixnextcell\includegraphics[width=.3\textwidth]{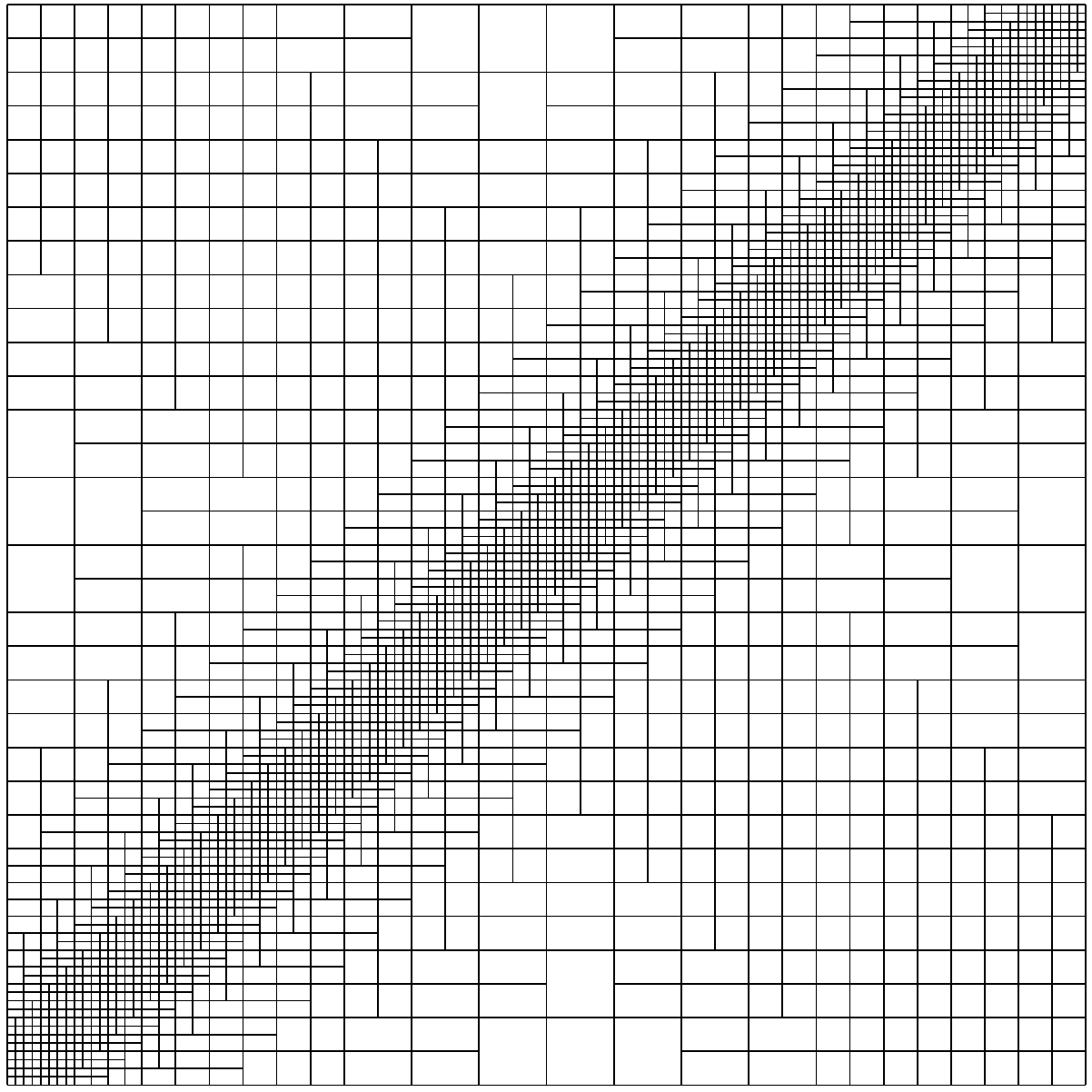}\pgfmatrixnextcell\includegraphics[width=.3\textwidth]{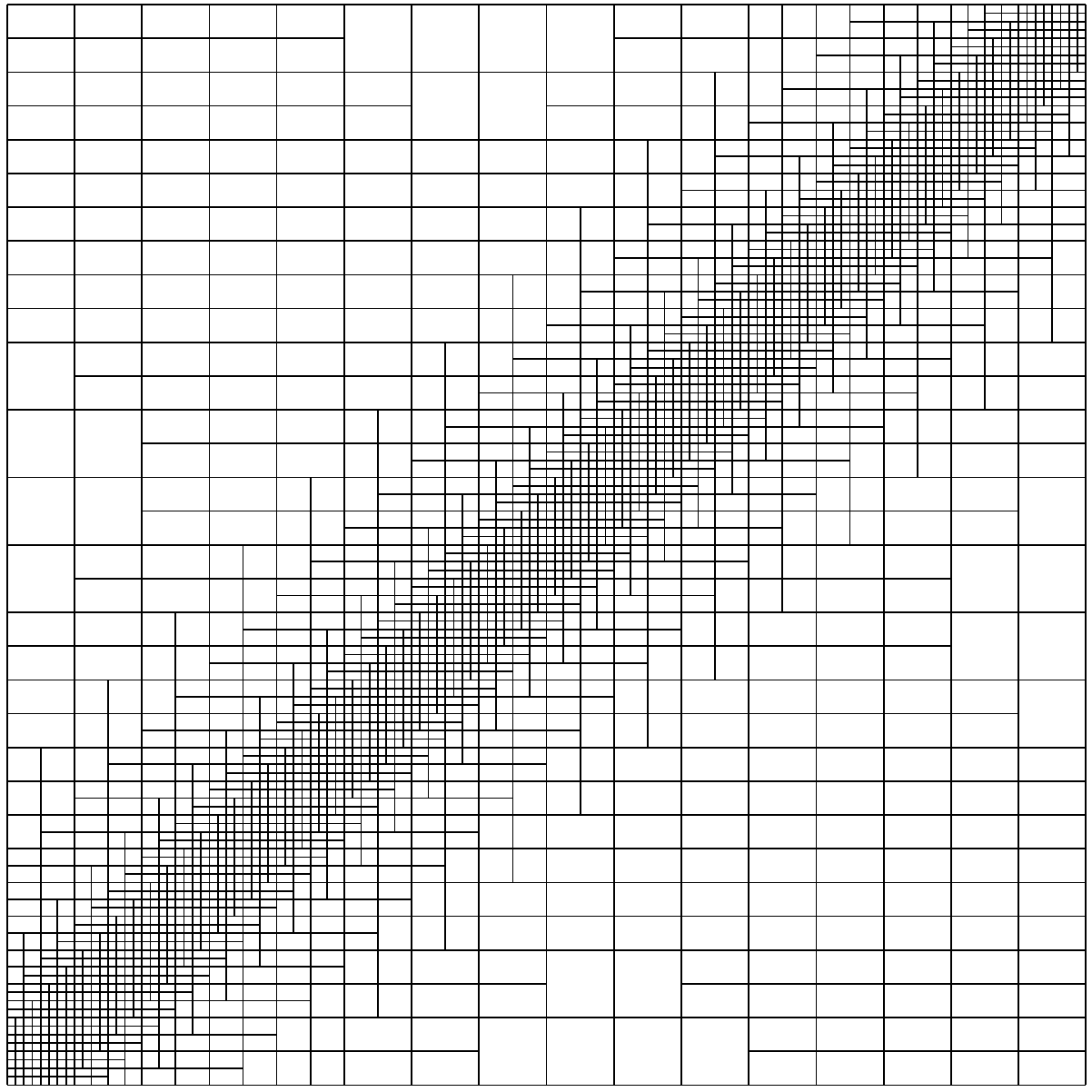}\\
\includegraphics[width=.3\textwidth]{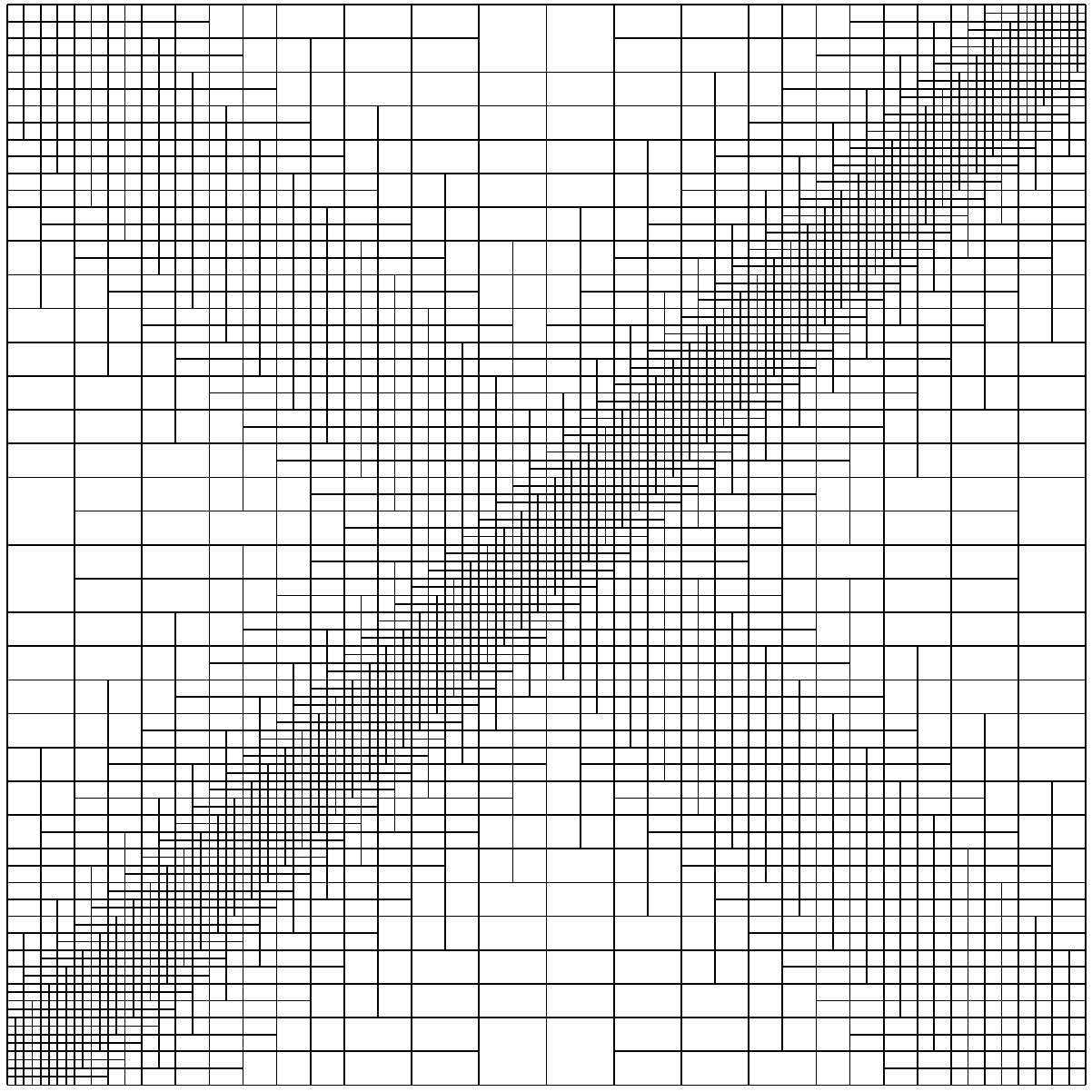}\pgfmatrixnextcell\includegraphics[width=.3\textwidth]{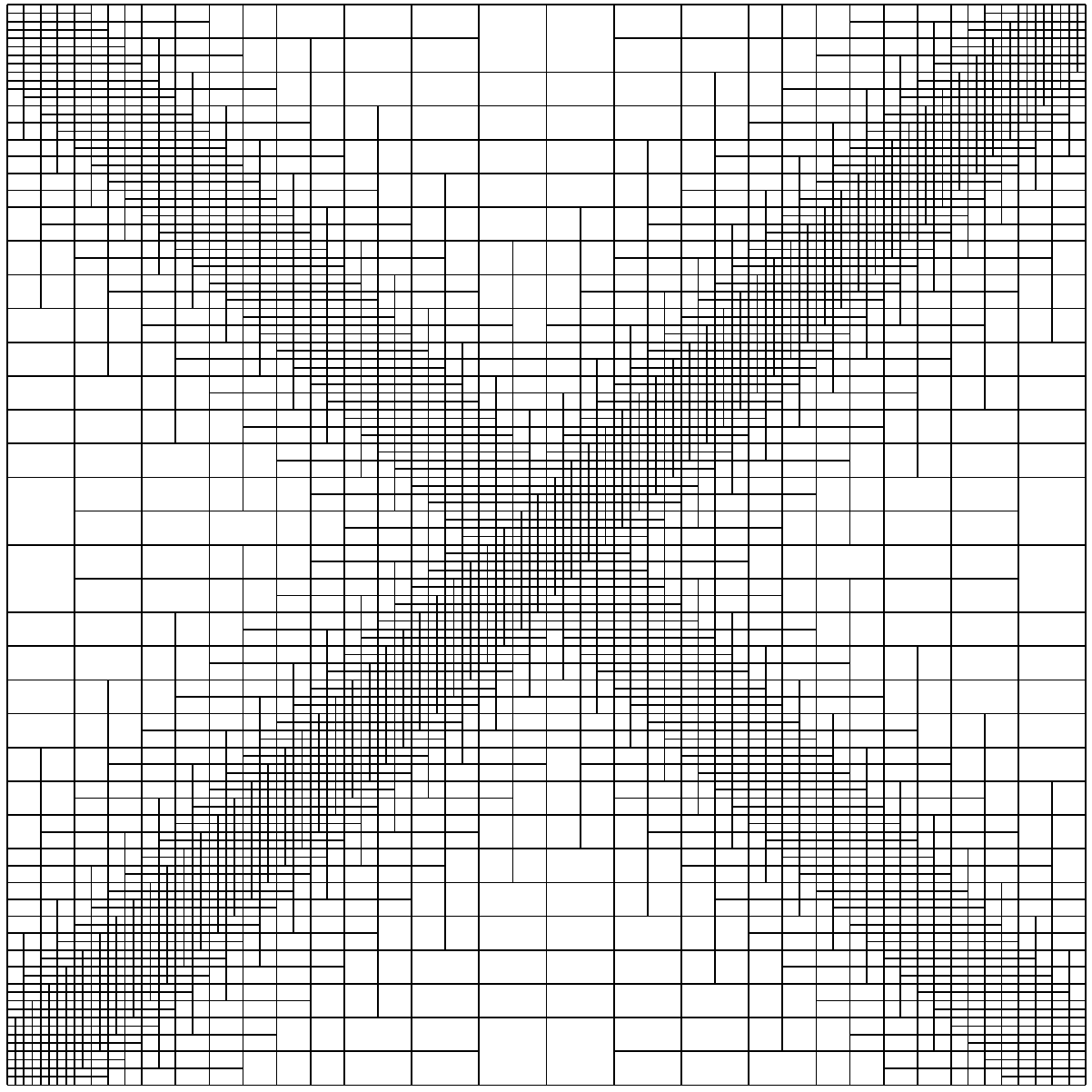}\pgfmatrixnextcell\includegraphics[width=.3\textwidth]{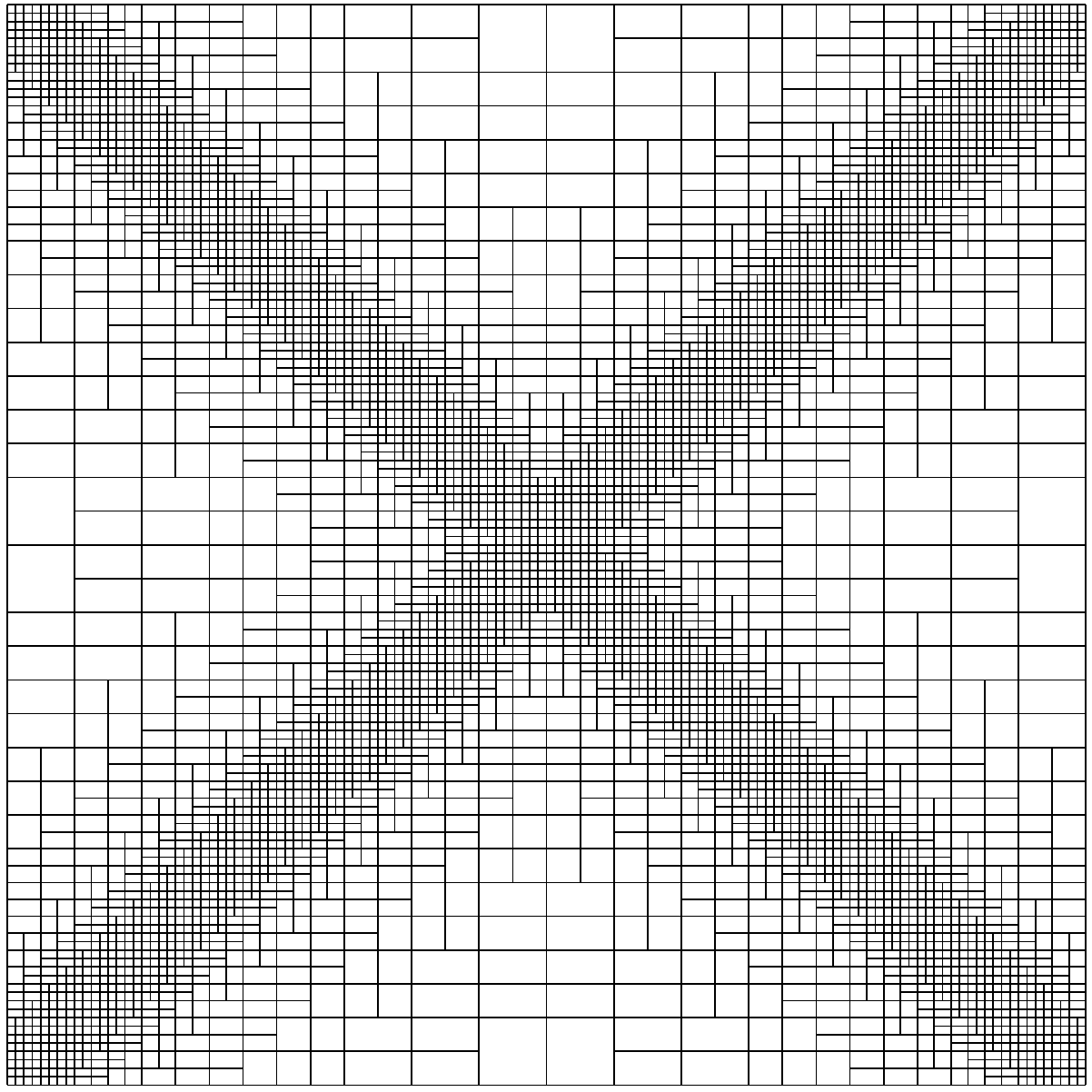}\\
};

\draw[\freccia] (m-1-1) -- (m-1-2);
\draw[\freccia] (m-1-2) -- (m-1-3);
\draw[\freccia] (m-1-3) -- (m-2-3);
\draw[\freccia] (m-2-3) -- (m-2-2);
\draw[\freccia] (m-2-2) -- (m-2-1);
\draw[\freccia] (m-2-1) -- (m-3-1);
\draw[\freccia] (m-3-1) -- (m-3-2);
\draw[\freccia] (m-3-2) -- (m-3-3);
\end{tikzpicture}
\caption{Example showing the adaptivity of the EG strategy. From a refinement localized along a diagonal, we perform iterations on the other diagonal to form an ``X'', switching the region of refinement. The figure has to be read following the arrows which represent the iterations. The EG strategy guarantees local linear independence of the LR B-splines on each of the LR meshes in the process. The bidegree considered is $\pmb{p}=(2,2)$.}\label{fig:EGex1}
\end{figure}

\begin{figure}
\begin{tikzpicture}
\matrix (m)[matrix of math nodes,column sep=2em,row sep=2em]{
\includegraphics[width=.3\textwidth]{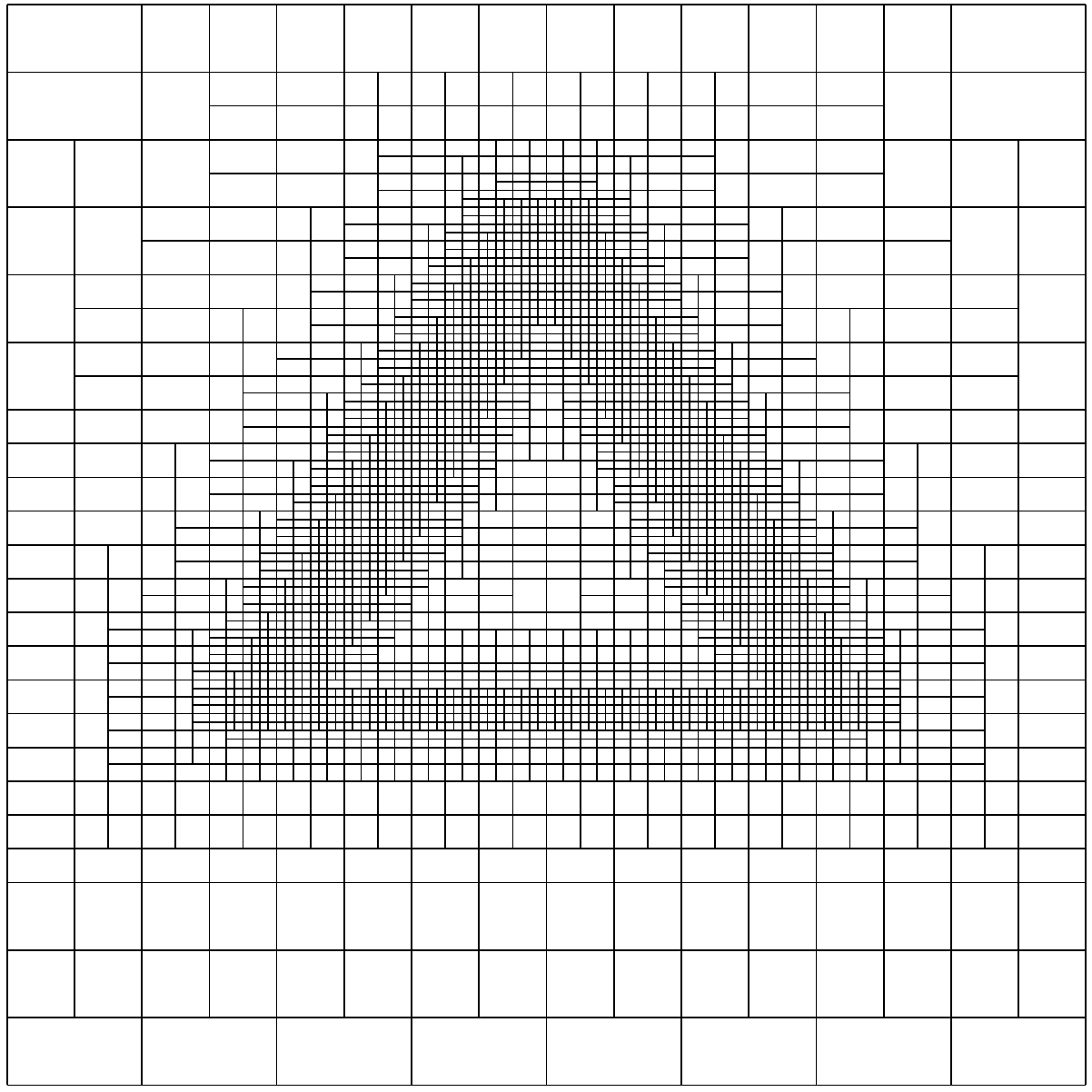}\pgfmatrixnextcell\includegraphics[width=.3\textwidth]{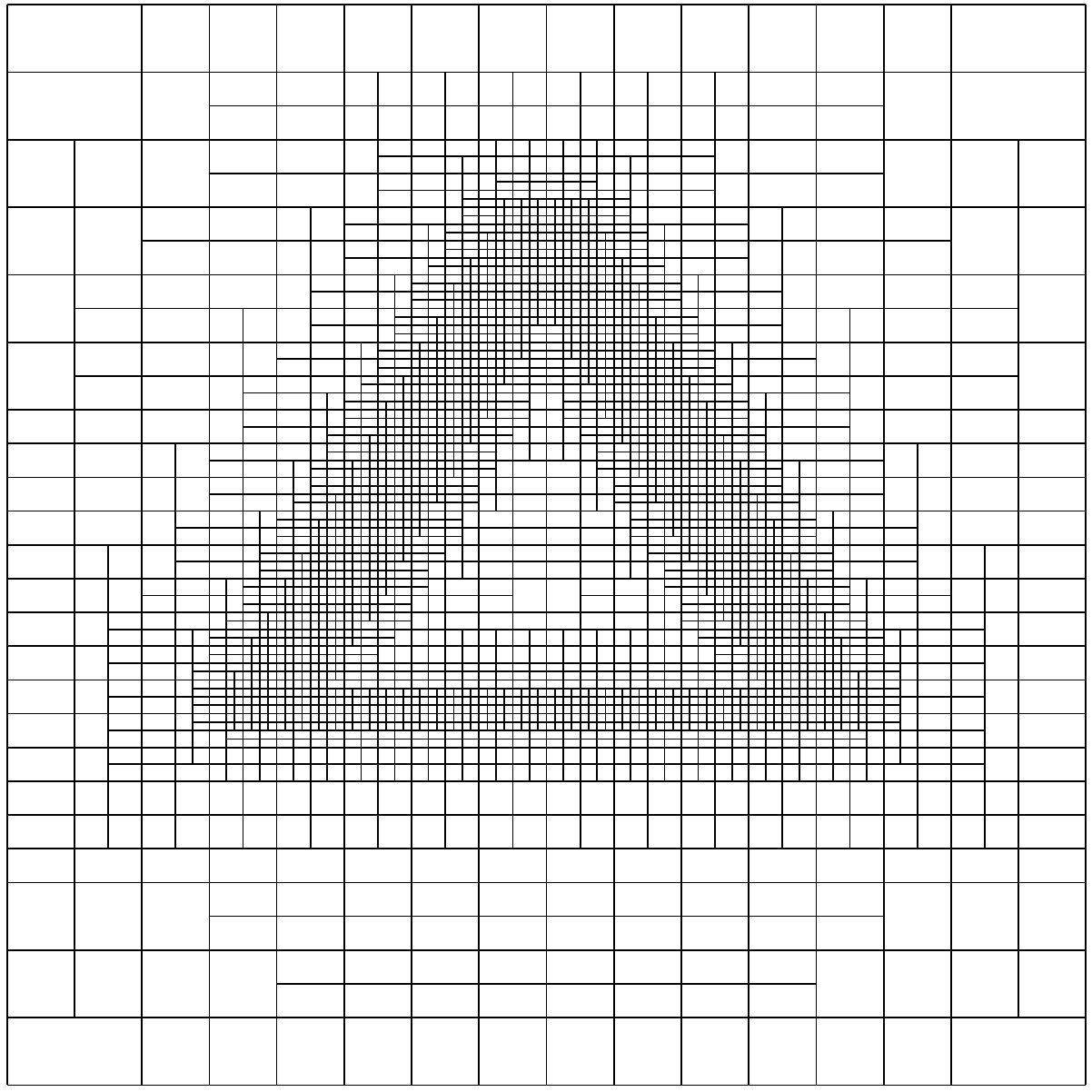}\pgfmatrixnextcell\includegraphics[width=.3\textwidth]{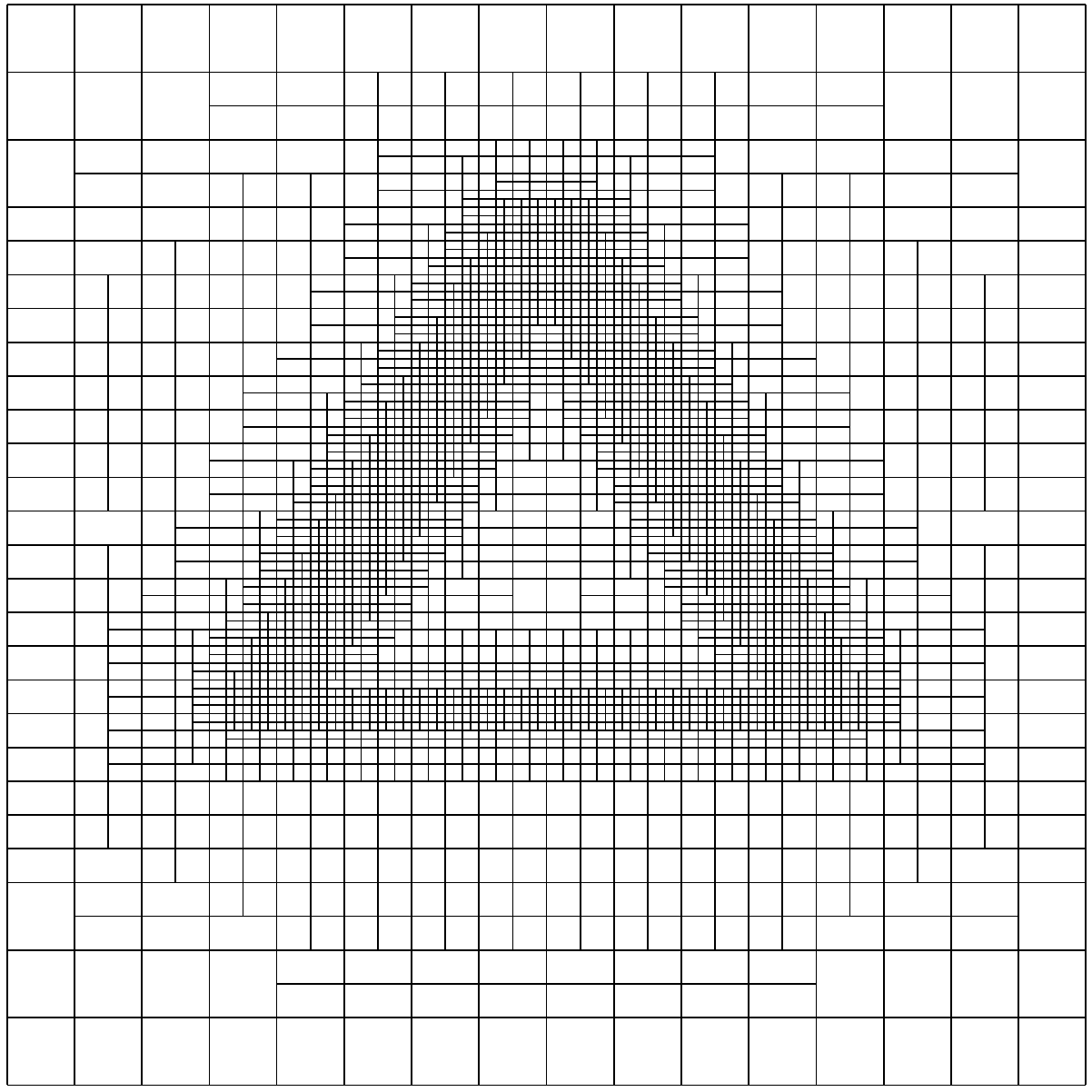}\\
\includegraphics[width=.3\textwidth]{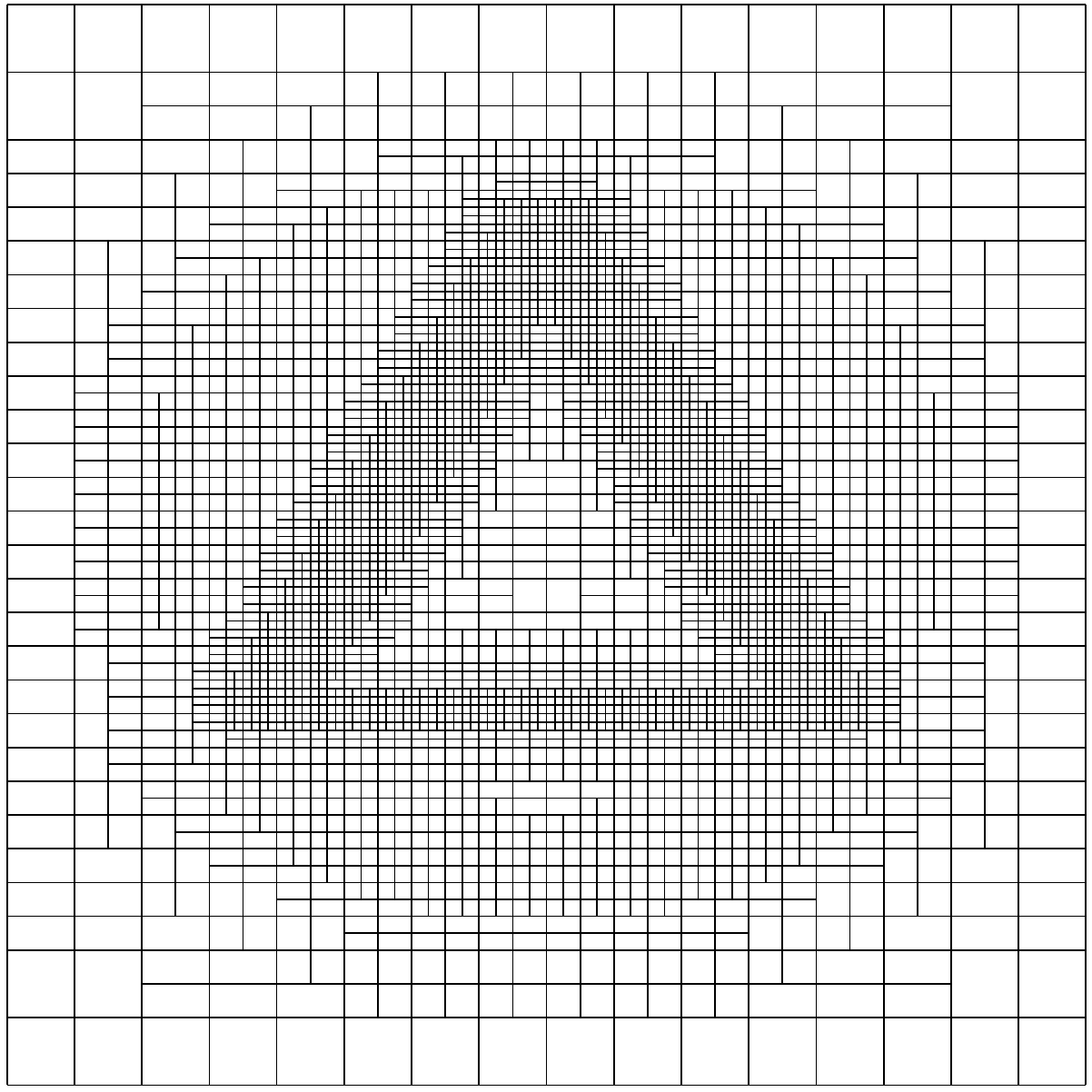}\pgfmatrixnextcell\includegraphics[width=.3\textwidth]{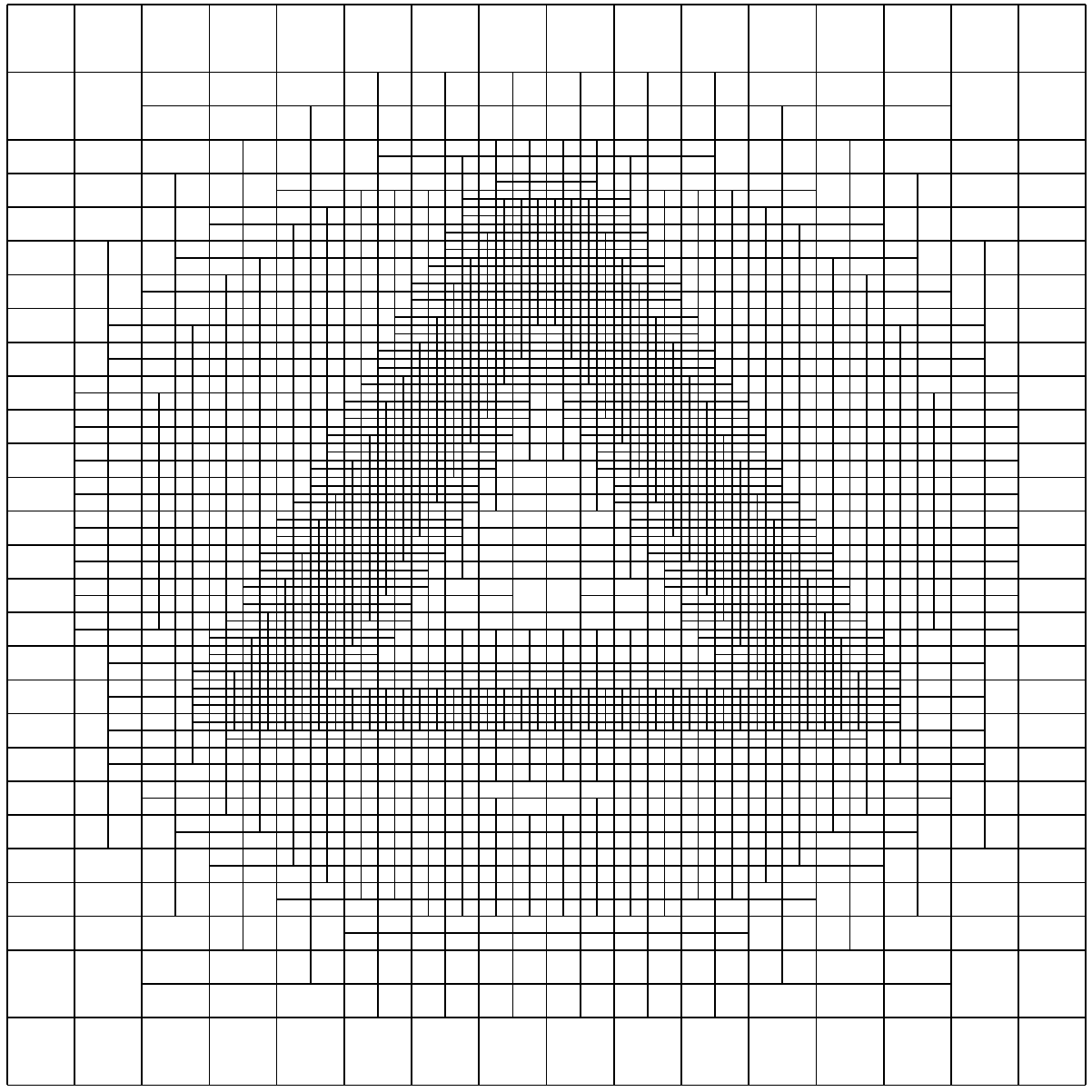}\pgfmatrixnextcell\includegraphics[width=.3\textwidth]{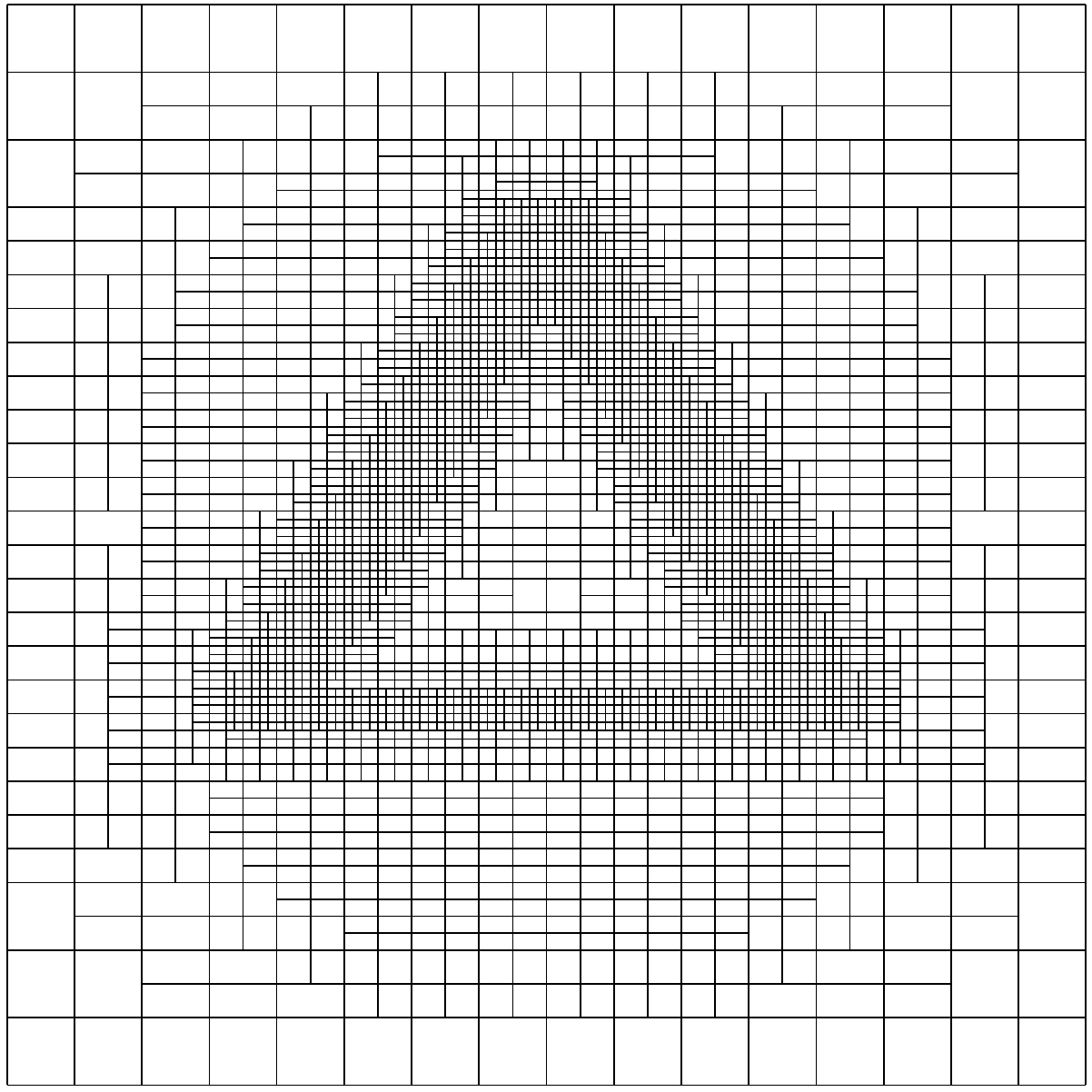}\\
\includegraphics[width=.3\textwidth]{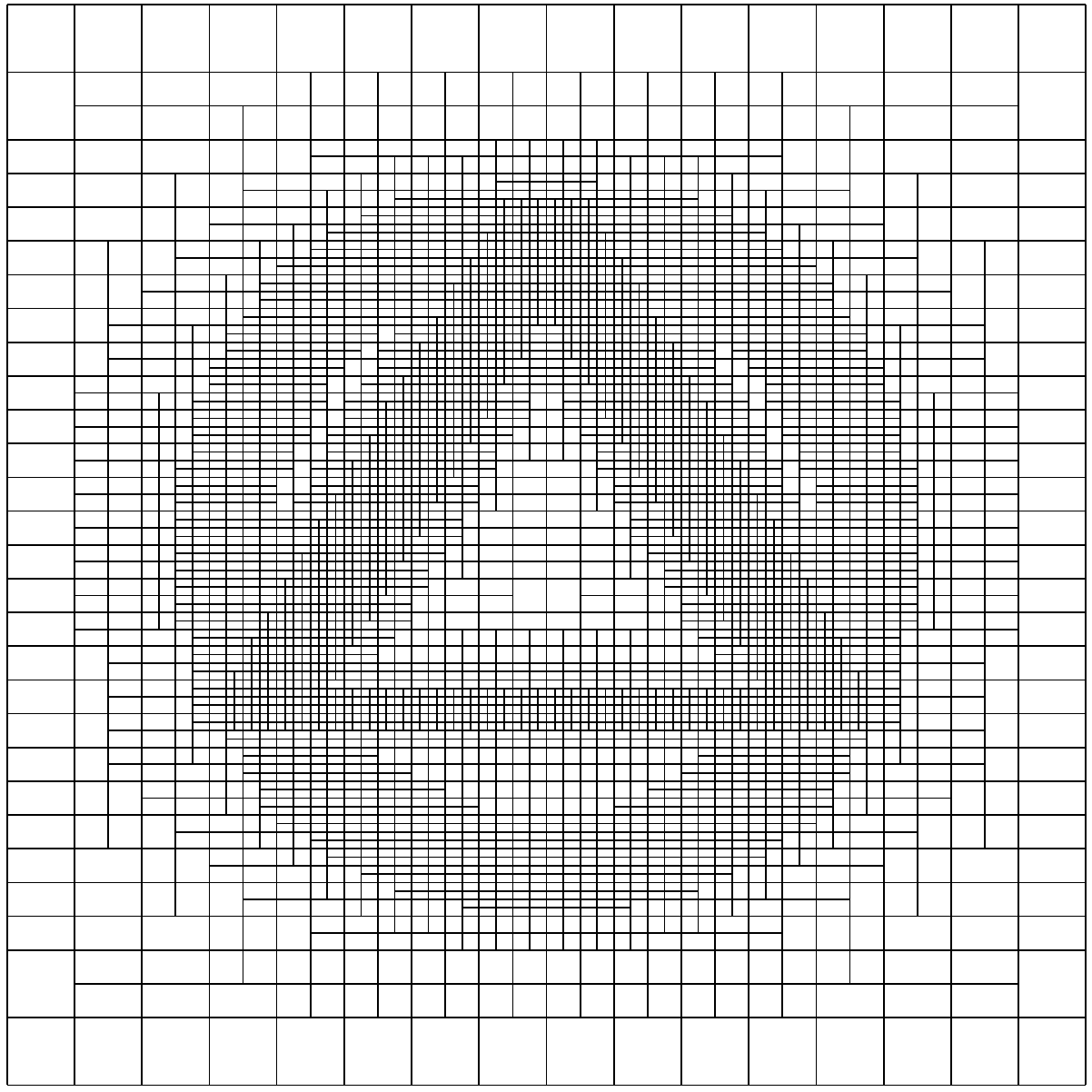}\pgfmatrixnextcell\includegraphics[width=.3\textwidth]{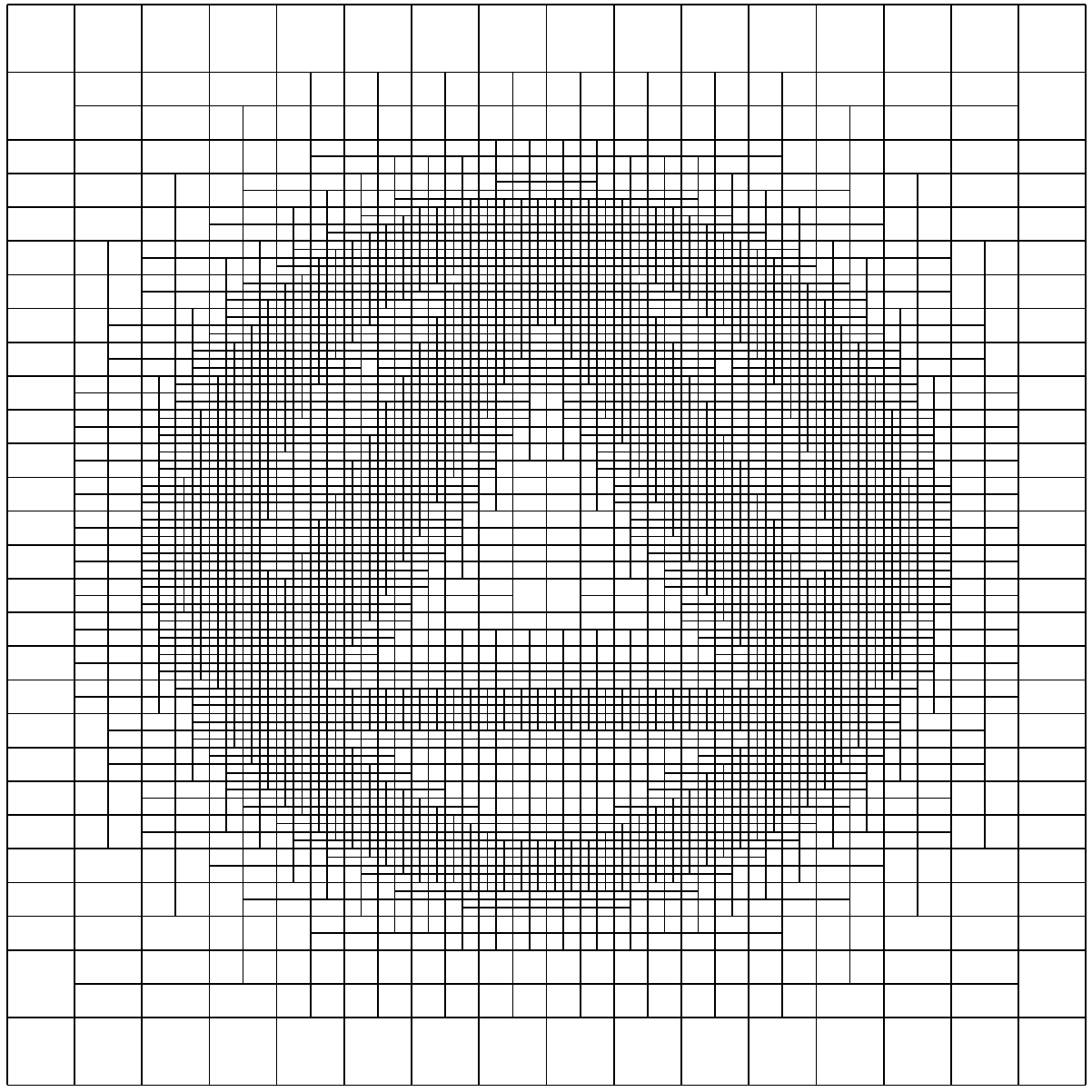}\pgfmatrixnextcell\includegraphics[width=.3\textwidth]{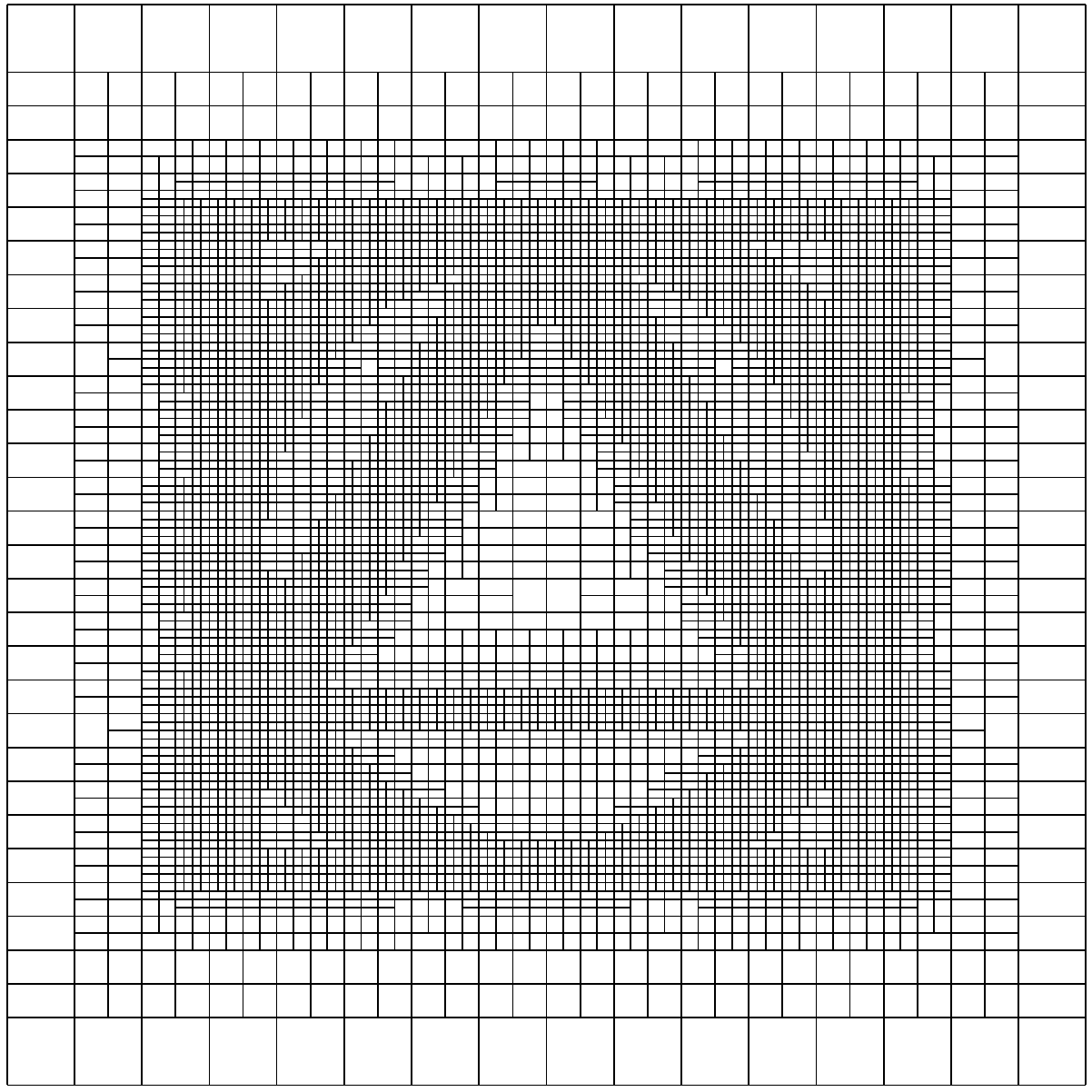}\\
};

\draw[\freccia] (m-1-1) -- (m-1-2);
\draw[\freccia] (m-1-2) -- (m-1-3);
\draw[\freccia] (m-1-3) -- (m-2-3);
\draw[\freccia] (m-2-3) -- (m-2-2);
\draw[\freccia] (m-2-2) -- (m-2-1);
\draw[\freccia] (m-2-1) -- (m-3-1);
\draw[\freccia] (m-3-1) -- (m-3-2);
\draw[dashed,\freccia] (m-3-2) -- (m-3-3);
\end{tikzpicture}
\caption{Example showing the adaptivity of the EG strategy. From a refinement localized along a triangle, we perform iterations on the the circumscribed circle and then on the square in which the circle is inscribed, switching the regions of refinement. The figure has to be read following the arrows which represent the iterations. For a matter of space, we do not show the intermediate steps when moving from the circle to the square to get the final mesh. The EG strategy guarantees local linear independence of the LR B-splines in all the iterations. The bidegree considered is $\pmb{p}=(2,2)$.}\label{fig:EGex2}
\end{figure}

\begin{oss}\label{remarkshadow}
We highlight the importance of refining only the closest to $\beta$ of the boxes in $\cS \beta$ of diameter larger than $sd$ in Algorithm \ref{alg:EGgrader} and updating the shadow. Halving only one box and updating the shadow avoids the presence of extra spurious lines in the mesh at the end of the process, as explained in Figure \ref{fig:oneattime}.
\end{oss}
\begin{figure}
\centering
\subfloat{
\begin{tikzpicture}
\node at (0,3.25) {\begin{tikzpicture}
\fill[rosso] (4,0) rectangle (4.5,1);
\fill[celeste] (0,0) rectangle (4,1);
\draw (0,0) rectangle (4.5,1);
\draw (2,0) -- (2,1);
\draw (4,0) -- (4,1);
\draw (4,.5) -- (4.5,.5);
\node at (2.25,-.35) {{\footnotesize (a)}};
\end{tikzpicture}};
\node at (0,1.5) {\begin{tikzpicture}
\fill[rosso] (4,0) rectangle (4.5,1);
\fill[celeste] (2,0) rectangle (4,1);
\draw (0,0) rectangle (4.5,1);
\draw (2,0) -- (2,1);
\draw (4,0) -- (4,1);
\draw (2,.5) -- (4.5,.5);
\draw (3,0) -- (3,1);
\node at (2.25,-.35) {{\footnotesize (b)}};
\end{tikzpicture}};
\node at (0,-.25) {\begin{tikzpicture}
\fill[rosso] (4,0) rectangle (4.5,1);
\fill[celeste] (2,0) rectangle (4,1);
\draw (0,0) rectangle (4.5,1);
\draw (2,0) -- (2,1);
\draw (4,0) -- (4,1);
\draw (2,.5) -- (4.5,.5);
\draw (3,0) -- (3,1);
\draw[red] (1,0) -- (1,1);
\node at (2.25,-.35) {{\footnotesize (c)}};
\end{tikzpicture}};
\end{tikzpicture}
}
\raisebox{.65cm}{
\captionsetup[subfloat]{labelformat=empty}
\subfloat[(d)]{
\includegraphics[width=.3\textwidth]{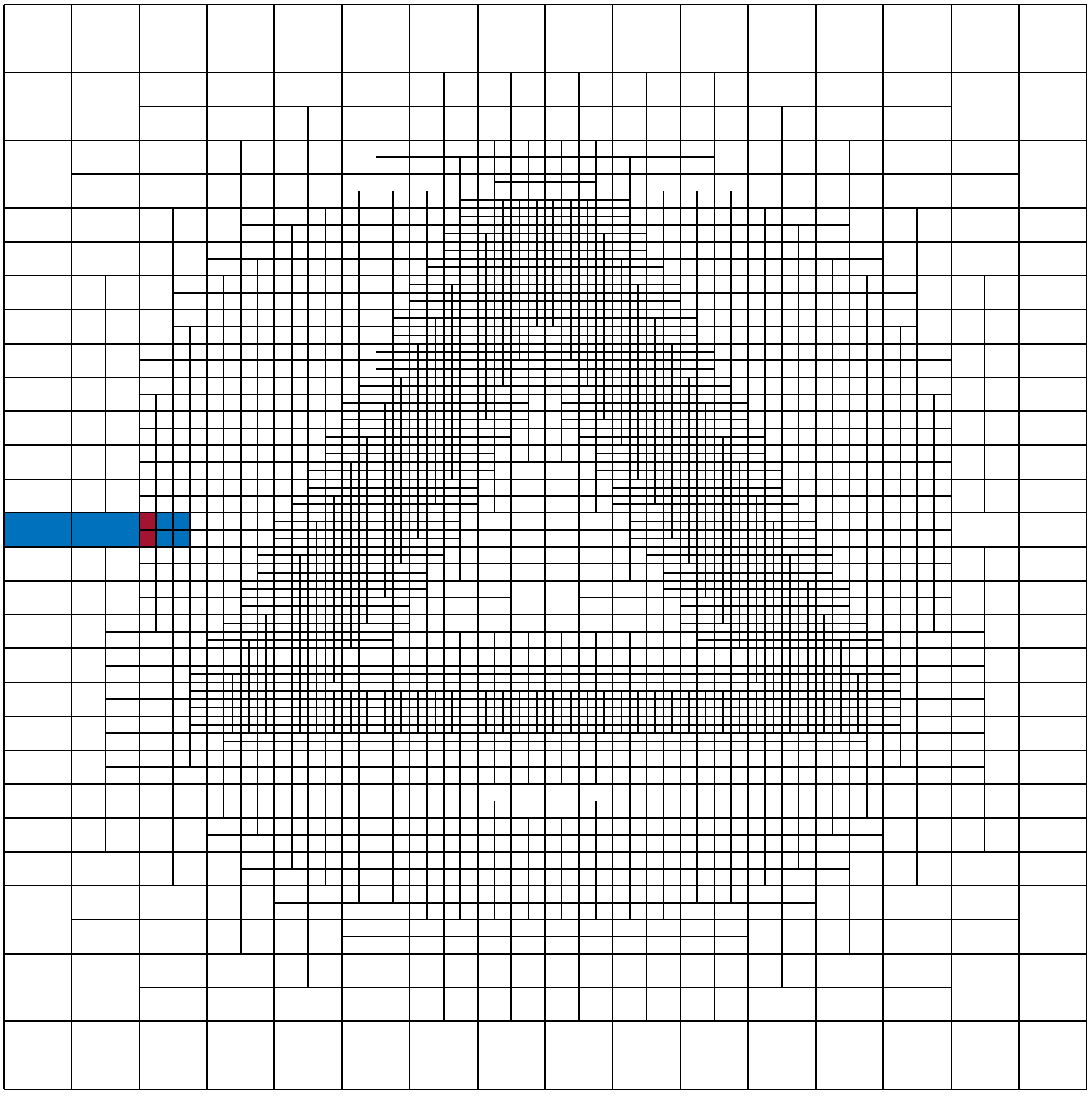}
}
\subfloat[(e)]{
\includegraphics[width=.3\textwidth]{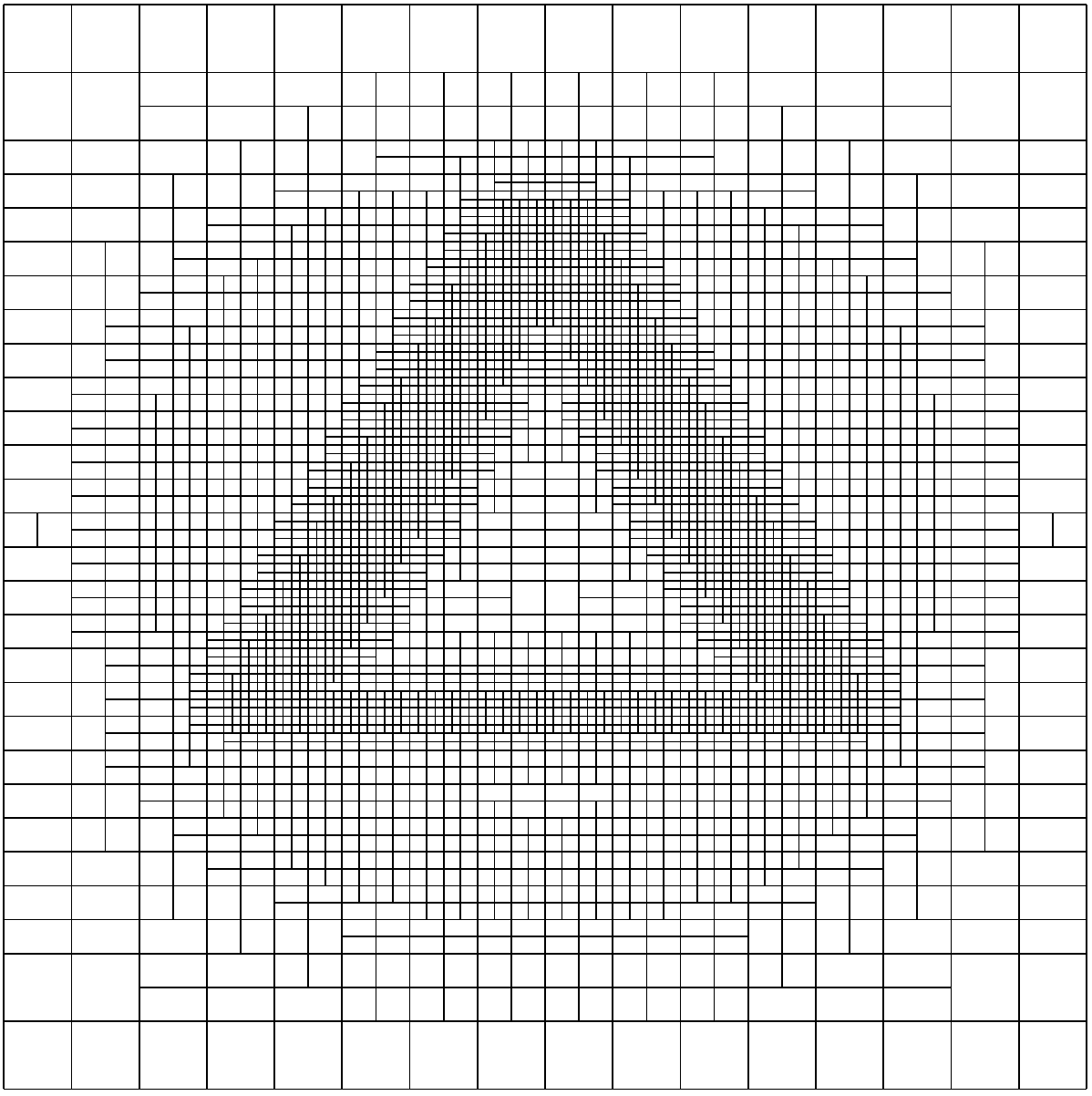}
}}
\caption{Supporting figure to Remark \ref{remarkshadow}. Let $p_1 =2$. In figure (a) we represent the horizontal generalized shadow of the two right-most boxes. The two boxes on the left in such shadow are too large for the EG strategy. If we proceed as described in Algorithm \ref{alg:EGgrader}, we halve only the closest to the boxes considered and we update the shadow before performing a further refinement. The result is represented in figure (b). If instead we halve both the large boxes in the shadow shown in figure (a) we have the mesh in figure (c) at the end of the process. As one can see, in figure (c) there is an extra vertical line. This vertical line may make the mesh be not an LR mesh anymore as it happens in figures (d)--(e). Figure (d) is an intermediate step while performing Algorithm \ref{alg:EGgrader} in the refining process shown in Figure \ref{fig:EGex2}. In particular, in this stage we are checking that the shadow highlighted is composed of boxes of the right size. The two boxes on the left are too large for the EG strategy. If we halve both of them before updating the shadow we obtain the mesh in figure (e) at the end of Algorithm \ref{alg:EGgrader}. The small vertical line has not traversed any LR B-spline on the mesh in figure (d). Hence, the mesh in figure (e) is not an LR mesh anymore. Instead, if each time we refine only the closest box, we obtain the LR mesh at the center of Figure \ref{fig:EGex2}, which is an LR mesh with the \NS~property.}\label{fig:oneattime}
\end{figure}

\subsection{Grading and spanning properties}
In this section we present the further properties of the EG strategy. We first analyze the grading of the mesh and then we identify the space spanned by the related set of LR B-splines. More specifically, we show
\begin{itemize}
\item bounds on the thinning of the boxes throughout the refinement,
\item bounds on the size ratio of adjacent boxes,
\item that the space spanned fills up the ambient space of the spline functions on the LR mesh.
\end{itemize} 
Assume that $\cN=(\cM,\pmb{p},\mu)$ is an LR mesh built using the EG strategy schematized in Algorithm \ref{alg:EG}. Then the aspect ratio of a box of $\cM$ is either $1:1$ or $2:1$ as rectangular boxes, of aspect ratio $2:1$, are obtained from square boxes and vice-versa throughout the making of the mesh. 
Furthermore, we note that, because of the constraints imposed in the \NS~property restoring step of the strategy, reported in Algorithm \ref{alg:EGgrader}, a box of size $c_1\times c_2$ in $\cM$ can be side by side only with boxes of same size or size double/half in one or both dimensions, i.e., boxes of sizes $c_1\times c_2, (2^{\pm 1}c_1)\times c_2, c_1\times (2^{\pm 1}c_2)$ and $2^{\pm 1}(c_1 \times c_2)$. More precisely, along the direction of the generalized shadow map, which is established by the shape of the box, there will only be boxes of the same size or with a scaling factor $2$ in one of the two dimension. In the direction orthogonal to the shadow, we may find boxes of same size or of  size double/half in both dimensions.
These bounds on the box sizes and neighboring boxes avoid the thinning throughout the refinement process and guarantee smoothly grading transitions between finer and coarser regions of the LR meshes produced by the EG strategy. In particular, given two adjacent boxes $\beta, \beta'$ of $\cM$, called $h_\beta$ the square root of the area of $\beta$, it holds
\begin{equation*}
\begin{array}{lrl}
\frac{\diam(\beta)}{h_\beta} = \left\{\begin{array}{ll}\sqrt{2} &\text{for square boxes,}\\\\\sqrt{\frac{5}{2}}&\text{for rectangular boxes,} \end{array}\right.&\Rightarrow\frac{\diam(\beta)}{h_\beta}\leq \sqrt{\frac{5}{2}},&\text{(A1)}\\\\
\frac{h_\beta}{h_{\beta'}} = \left\{\begin{array}{ll}
1 & \text{if $\beta,\beta'$ have same width in both directions,}\\\\
2 & \text{if $\beta$ has width double that of $\beta'$ in both directions,}\\\\
\sqrt{2} & \text{if $\beta$ has width double that of $\beta'$ in only one direction,}
\end{array}\right.&\Rightarrow  \frac{h_\beta}{h_{\beta'}}\leq 2.&\text{(A2)}
\end{array}
\end{equation*}
Inequalities (A1)--(A2) show that the box-partition associated to $\cM$ satisfies the \textbf{shape regularity} and \textbf{local quasi uniformity} conditions, which are two of the so-called \textbf{axioms of adaptivity}: a set of requirements which theoretically ensure optimal algebraic convergence rate in adaptive FEM and IgA, see \cite{axioms} and \cite[Sections 5--6]{b} for details. In particular, conditions (A1)--(A2) is what is demanded in terms of grading and overall appearance of the mesh used for the discretization. 
 
We now prove another important feature of the EG strategy: the space spanned is the entire \textbf{spline space}. The spline space on a given LR mesh $\cN=(\cM, \pmb{p},\mu)$, denoted by $\SSS(\cN)$, is defined as 
\begin{equation*}
\SSS(\cN) := \left\{\begin{split}
f:\RR^2 \to \RR \,:\,& \supp f \subseteq \Omega,\\
& f|_{\beta} \text{ is a polynomial of bidegree }\pmb{p}\text{ in any }\beta\text{ box of }\cM,\\
& f \in C^{p_{3-k}-\mu(\gamma)}\text{-continuous across any meshline }\gamma\text{ of }\cM\text{ along the }k\text{th direction}.
\end{split}\right\}.
\end{equation*}
In general, all the spaces spanned by generalizations of the B-splines addressing adaptivity, such as LR spline spaces, are just subspaces of the spline space on the underlying mesh. The next result ensures that when we are using LR meshes generated by the EG strategy, the span of the LR B-splines actually fills up the entire spline space.
\begin{thm}
Let $\cN = (\cM, \pmb{p}, \mu)$ be an LR-mesh provided by several iterations of EG strategy and let $\cL$ be the associated LR B-spline set. Then $\spn \cL = \SSS(\cN)$.
\end{thm}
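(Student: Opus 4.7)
The plan is to proceed by induction on the number of EG refinement iterations used to build $\cN$, mirroring the inductive construction of the LR mesh itself. For the base case, $\cN$ is the initial open tensor mesh and $\cL$ is the standard tensor B-spline set on it, so the equality $\spn\cL = \SSS(\cN)$ reduces to the classical completeness of tensor-product B-splines over their natural spline space.

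For the inductive step, I would assume $\spn \cL = \SSS(\cN)$ and let $\cN', \cL'$ be the mesh and B-spline set produced by one EG iteration. Knot insertion preserves the underlying function, so $\spn \cL \subseteq \spn \cL' \subseteq \SSS(\cN')$. By Theorem \ref{N2Sproperty} the refined mesh $\cN'$ has the \NS~property, hence by Theorem \ref{loclinind} the elements of $\cL'$ are locally, and therefore globally, linearly independent, giving $|\cL'| = \dim \spn \cL'$. It thus suffices to show that $|\cL'| = \dim \SSS(\cN')$. The natural way to check this is to decompose the iteration into single-segment insertions (first the halving of the targeted boxes, then the extensions produced by \texttt{EGgrader}) and, at each such insertion, compare the increase in $|\cL|$ coming from knot insertion with the increase in $\dim \SSS$ computed by a local dimension formula around the newly inserted segment.

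The main obstacle is controlling the \emph{from-scratch} minimal-support B-splines on the refined mesh: as emphasized in the remarks after the definition of LR B-splines, such functions may appear on $\cN'$ without being produced by knot insertion from $\cL$, and their presence would make $\dim \SSS(\cN')$ strictly larger than $|\cL'|$ and thereby break completeness. To rule them out, I expect the hierarchical decomposition $\cM = \bigcup_{\ell \leq L} \cM_\ell^T|_{\Omega_\ell}$ with $\Omega_{\ell-1} \supseteq \cS \Omega_\ell$, established inside the proof of Theorem \ref{N2Sproperty}, to play the key role: it exhibits the EG mesh as a nested hierarchy of dyadic tensor refinements, along which one can argue level by level that every minimal-support B-spline on $\cN'$ is in fact obtained by a chain of knot insertions starting from a tensor B-spline on some $\Omega_\ell$. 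Once this is established, no \emph{from-scratch} B-splines arise, $\cL'$ coincides with the full set of minimal-support B-splines on $\cN'$, and the dimension count closes, yielding $\spn \cL' = \SSS(\cN')$ and hence the theorem.
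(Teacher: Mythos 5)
Your proposal takes a genuinely different route from the paper, but its central step is missing, and that step is where the whole difficulty of the theorem lives. The paper's own proof is very short: every line inserted in the refining step of the EG strategy traverses the support of at least one LR B-spline, hence a new line along the $k$th direction crosses at least $p_k+2$ orthogonal meshlines at the moment of insertion, and the \NS-restoring step only prolongs such lines; by \cite[Theorem 12]{bressan2} this length condition on all inserted segments already yields $\spn\cL = \SSS(\cN)$. No induction on iterations, no appeal to linear independence, and no computation of $\dim\SSS(\cN)$ are needed.

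In your plan, everything reduces to establishing $|\cL'| = \dim\SSS(\cN')$ by matching, segment by segment, the increment of $|\cL|$ under knot insertion against the increment of $\dim\SSS$ given by ``a local dimension formula around the newly inserted segment.'' You never supply that formula, and it is not a routine ingredient: the dimension of the spline space over an LR mesh is not a purely local count, the known formulas carrying a homological correction term whose vanishing must itself be proved, and the dimension increment produced by a single segment matches the number of new B-splines only when one controls how many orthogonal meshlines the segment crosses --- which is precisely the hypothesis the paper extracts and hands to \cite[Theorem 12]{bressan2}. Two further points would need repair. First, the intermediate meshes obtained by decomposing one EG iteration into single-segment insertions generally do not have the \NS~property, so local linear independence and the identity $|\cL| = \dim\spn\cL$ are unavailable until the iteration is complete; your bookkeeping must carry the two increments separately through all intermediate meshes. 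Second, the role you assign to the \emph{from-scratch} minimal-support B-splines is not quite right: their presence does not by itself force $\dim\SSS(\cN') > |\cL'|$ (they may already lie in $\spn\cL'$), and ruling them out does not by itself give completeness, since $\SSS(\cN')$ need not be spanned by minimal-support functions. To close your argument you would essentially have to redo the dimension-increase (``hand-in-hand'') analysis of \cite{tor} for every segment inserted by Algorithms \ref{alg:EG} and \ref{alg:EGgrader}, i.e.\ reprove in this special case the result that the paper simply cites.
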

\begin{proof}
If $\cN$ is a tensor mesh, the LR B-spline set coincides with the tensor B-spline set and the statement is true by the Curry-Schoenberg Theorem. If instead there are local insertions in $\cM$, we recall that during the refining steps of the EG strategy that yielded $\cM$, we have always inserted new lines  traversing the support of at least one LR B-spline. This means that each new line along the $k$th direction traversed at least $p_k+2$ orthogonal meshlines when has been inserted. In the \NS~property recovering steps we then have further prolonged some of such lines. By \cite[Theorem 12]{bressan2}, this ``length'' of the lines in terms of intersections guarantees that $\spn\cL = \SSS(\cN)$.
\end{proof}
This spanning property is achieved also by using the HLR strategy \cite{bressan2}.
\section{Conclusion}\label{conclusions}
We have presented a simple refinement strategy ensuring the local linear independence of the associated LR B-splines. Furthermore, the width of the regions refined at each iteration of the strategy guarantees that the span of the LR B-splines fills up the whole spline space on the LR mesh. 

We have called it Effective Grading (EG) strategy as the transition between coarser and finer regions is rather gradual and smooth in the LR meshes produced, with strict bounds on the aspect ratio of the boxes and on the sizes of the neighboring boxes. Such a grading ensures that the requirements on the mesh appearance listed in the axioms of adaptivity \cite{axioms,b} are verified. The latter are a set of sufficient conditions on mesh grading, refinement strategy, error estimates and approximant spaces in adaptive numerical methods to theoretically guarantee optimal algebraic convergence rate of the numerical solution to the real solution. The verification of the remaining axioms will be the topic of future research.

\section*{Acknowledgments}
This work was partially supported by the European Council under the Horizon 2020 Project Energy oriented Centre of Excellence for computing applications - EoCoE, Project ID 676629.
The author is member of Gruppo Nazionale per il Calcolo Scientifico, Istituto Nazionale di Alta Matematica.

\begin{appendices}
\section{Equivalence with the shadow map}
In this appendix we show that the generalized shadow map is equivalent to the definition of shadow map, given in \cite[Definition 10]{bressan2}, when the underlying mesh is a tensor mesh and the set considered is constituted of a collection of boxes. In order to recall the latter, we introduce the \textbf{separation distance}. Given a direction $k \in \{1,2\}$, let $\cN=(\cM,\pmb{p},\mu)$ be a tensor mesh and $\cM_{3-k}$ be the subcollection in $\cM$ of all the meshlines in the $(3-k)$th direction. Given two points $\pmb{p},\pmb{q} \in \Omega$ the separation distance of $\pmb{p}$ and $\pmb{q}$ along direction $k$ with respect to the tensor mesh $\cN$ is defined as 
$$
\spp_k^{\cN} (\pmb{p},\pmb{q}) := \left\{\begin{array}{ll}
\#\{\pmb{t} \in \Omega \,|\, \pmb{t} \in \gamma \cap \cM_{3-k}\} & \text{if }\pmb{p}, \pmb{q}\text{ are axis-aligned in the $k$th direction},\\
+\infty &\text{otherwise,}
\end{array}\right.
$$
with $\gamma$ the segment along direction $k$ between $\pmb{p}$ and $\pmb{q}$.  
Given a set $A\subseteq \Omega$ composed of boxes of $\cM$, we define 
$$
\spp_k^{\cN}(\pmb{p},A) = \inf_{\pmb{q} \in A} \spp_k^{\cN}(\pmb{p},\pmb{q}).
$$
The definition of shadow map along direction $k$ with respect to the tensor mesh $\cN$ given in \cite{bressan2} is then
$$
\fS A = \overline{\{\pmb{p} \in \Omega \,|\, \spp_k^{\cN}(\pmb{p},A) \leq p_k\}}.
$$
We use the gothic symbol $\fS$ to distinguish it from the generalized shadow map, defined in Section \ref{sec:EGpre}. We now show that the two are equivalent. Let $\beta$ be a box in $A$. Then, $\spp_k^{\cN}(\pmb{p},A) = 0$  for all $\pmb{p} \in \beta$ and $\beta \subseteq \fS(A)$. Let instead $\beta \subseteq \cS A\setminus A$. Then $\spp_k^{\cN}(\pmb{p},A) \leq p_k+1$ for all $\pmb{p} \in \beta$ and so $\beta \subseteq \fS A$. Note that the ``for all $\pmb{p} \in \beta$'' is true because $A$ is composed of boxes of $\cM$. Otherwise, there could be points $\pmb{p}$ in $\beta$ that are not part of the shadow $\fS A$, see, e.g., \cite[Figure 5]{bressan2}. We have proved that  $\cS A \subseteq \fS A$. We now show the opposite, that is, $\cS A \supseteq \fS A$.  Let $\pmb{p} \in \fS A$. Then $p_k +1 \geq \inf_{\pmb{q} \in A} \spp_k^{\cN}(\pmb{p},\pmb{q})$. If $\pmb{p} \notin A$ then such infimum is reached for some $\pmb{q} \in \partial A$ and so $\pmb{p} \in \overline{\pmb{q}_*^1\pmb{q}_*^2}$, with $\pmb{q}_*^1$ as defined in Equation \eqref{q*}. Therefore $\pmb{p}$ is contained in a box $\beta$ of $\cM$ intersecting $\overline{\pmb{q}_*^1\pmb{q}_*^2}$ and $\beta \subseteq \cS A$. If instead $\pmb{p} \in A$, there is nothing to prove as $A$ is in $\cS A$. 
\end{appendices}

\bibliography{biblio}
\end{document}